\documentclass[12pt]{amsart}
\usepackage{amsfonts,amssymb,amscd,amstext,mathrsfs}

\input xy
\xyoption{all}

\textwidth = 157mm
\textheight = 239mm
\evensidemargin=0mm
\oddsidemargin=0mm
\hoffset=2mm
\voffset=-20mm
\parskip =1mm
\parindent = 6mm
\linespread{1.095}
\pagestyle{plain}

\newtheorem{theorem}{Theorem}[section]
\newtheorem{lemma}[theorem]{Lemma}
\newtheorem{corollary}[theorem]{Corollary}
\newtheorem{proposition}[theorem]{Proposition}

\theoremstyle{definition}

\newtheorem{example}[theorem]{Example}

\newtheorem{exercise}[theorem]{Exercise}

\newcommand{\C}{\mathbb{C}}
\newcommand{\D}{\mathbb{D}}
\newcommand{\N}{\mathbb{N}}
\renewcommand{\P}{\mathbb{P}}
\newcommand{\R}{\mathbb{R}}

\newcommand{\cC}{\mathscr{C}}
\newcommand{\cO}{\mathscr{O}}
\newcommand{\id}{\mathrm{id}}
\newcommand{\Hom}{\operatorname{Hom}}
\newcommand{\Ker}{\operatorname{Ker}}
\renewcommand{\Re}{\operatorname{Re}}

\numberwithin{equation}{section}

\begin{document}

\title{Eight lectures on Oka manifolds}

\author{Finnur L\'arusson}

\address{Finnur L\'arusson, School of Mathematical Sciences, University of Adelaide, Adelaide SA 5005, Australia}
\email{finnur.larusson@adelaide.edu.au}

\dedicatory{Notes for lectures given at the Institute of Mathematics \\ of the Chinese Academy of Sciences in Beijing in May 2014}

\thanks{The author would like to thank Professor Xiangyu Zhou and Professor John Erik Forn\ae ss for the invitation to give these lectures.  The author would also like to thank Professor Zhou and the Institute of Mathematics of the Chinese Academy of Sciences for their kind hospitality.}

\thanks{The author was supported by Australian Research Council grants DP120104110 and DP150103442.}

\date{28 May 2014 (last minor changes 6 October 2015)}

\maketitle
\tableofcontents


\section{Introduction} 
\label{s:intro}

\noindent
Over the past decade, the class of Oka manifolds has emerged from Gromov's seminal work on the Oka principle.  Roughly speaking, Oka manifolds are complex manifolds that are the target of \lq\lq many\rq\rq\ holomorphic maps from affine spaces.  They are \lq\lq dual\rq\rq\ to Stein manifolds and \lq\lq opposite\rq\rq\ to Kobayashi-hyperbolic manifolds.  The prototypical examples are complex homogeneous spaces, but there are many other examples: there are many ways to construct new Oka manifolds from old.  The class of Oka manifolds has good formal properties, partly explained by a close connection with abstract homotopy theory.  These notes are meant to give an accessible introduction, not to all of Oka theory, but more specifically to Oka manifolds, how they arise and what we know about them.

In this section, we will see two different ways to approach and motivate the concept of an Oka manifold.  First let us mention a few references.  Modern Oka theory began with Gromov's 1989 paper \cite{Gromov1989}.  There are three recent surveys, \cite{Forstneric2013},  \cite{ForstnericLarusson2011}, and \cite{Larusson2010a}, and a very comprehensive monograph \cite{Forstneric2011}.  The notes \cite{Larusson2009a} give an elementary introduction to the notions of ellipticity and hyperbolicity in complex analysis.

\subsection{How flexible are holomorphic maps?}  Let $X$ and $Y$ be complex manifolds (in these notes always taken to be connected and second countable).  Can every continuous map $X\to Y$ be deformed to a holomorphic map?  That is, does every homotopy class of continuous maps $X\to Y$ contain a holomorphic map?  Let us look at some examples.

\begin{itemize}
\item  $\C^*\to\D^*$: every holomorphic map is constant (Liouville), so the only winding number realised by holomorphic maps is $0$.  Here, $\D$ is the disc $\{z\in\C:\lvert z\rvert<1\}$.
\item  $\D^*\to\D^*$: holomorphic maps only realise nonnegative winding numbers.
\item  $\D\setminus\{\tfrac 1 2, \tfrac 1 3, \tfrac 1 4, \ldots, 0\}\to\D^*$: there are uncountably many homotopy classes of continuous maps, but only countably many classes of holomorphic maps.
\end{itemize}
In all three examples, if the target is $\C^*$ instead of $\D^*$, then every continuous map is homotopic to a holomorphic map.  More generally:

\begin{theorem}  \label{t:C-star}
Let $X$ be a Stein manifold and $f:X\to\C^*$ be a continuous map.  Then $f$ is homotopic to a holomorphic map.
\end{theorem}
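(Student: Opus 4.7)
The plan is to use the exponential sheaf sequences on $X$ together with Cartan's Theorem B to match the continuous map $f$ up to homotopy with a holomorphic one. First I would introduce the holomorphic and continuous exponential sequences
\[ 0 \to \mathbb{Z} \to \cO \to \cO^* \to 0, \qquad 0 \to \mathbb{Z} \to \cC \to \cC^* \to 0, \]
where $\cO^*$ and $\cC^*$ denote the sheaves of nowhere-zero holomorphic and continuous functions and the surjections are induced by $h\mapsto \exp(2\pi i h)$. Note that $f$ is precisely a global section of $\cC^*$, while a holomorphic map $X\to\C^*$ is a global section of $\cO^*$. Passing to long exact sequences in cohomology, and using Cartan's Theorem B to get $H^1(X,\cO)=0$ in the holomorphic case and the fact that $\cC$ is a fine sheaf to get $H^1(X,\cC)=0$ in the continuous case, both connecting homomorphisms
\[ H^0(X,\cO^*)\to H^1(X,\mathbb{Z}), \qquad H^0(X,\cC^*)\to H^1(X,\mathbb{Z}) \]
are surjective, and the kernel of the second consists of those continuous nowhere-zero functions of the form $\exp(2\pi i h)$ with $h\colon X\to \C$ continuous.

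Next I would use surjectivity of the holomorphic connecting map to choose a nowhere-zero holomorphic $g\colon X\to\C^*$ mapping to the same class as $f$ in $H^1(X,\mathbb{Z})$. Then $f/g$ is a continuous nowhere-zero function mapping to $0$ in $H^1(X,\mathbb{Z})$, so by exactness of the continuous sequence there exists a continuous $h\colon X\to\C$ with $f/g=\exp(2\pi i h)$. The straight-line homotopy
\[ H_t(x) = g(x)\exp\bigl(2\pi i (1-t) h(x)\bigr) \]
then joins $H_0=f$ to $H_1=g$ through continuous maps $X\to\C^*$, so $f$ is homotopic to the holomorphic map $g$.

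The only substantive analytic input is the vanishing $H^1(X,\cO)=0$ from Cartan's Theorem B; the rest is formal bookkeeping with the two exponential sequences. In spirit the argument expresses the classical fact that homotopy classes of continuous maps $X\to\C^*$ are classified by $H^1(X,\mathbb{Z})$, and that on a Stein manifold every such class is already represented by a holomorphic map. The main (very small) conceptual obstacle is recognising the right sheaf-theoretic setup; once the two exponential sequences are written down in parallel, the proof essentially writes itself.
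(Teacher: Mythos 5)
Your proof is correct and is essentially the same argument as the paper's, expressed in sheaf-cohomological rather than explicit \v{C}ech/partition-of-unity language. The paper constructs the $\mathbb{Z}$-valued cocycle $(n_{\alpha\beta})$ by hand and splits it holomorphically by solving a $\bar\partial$-equation --- which is precisely unpacking the surjectivity of $H^0(X,\cO^*)\to H^1(X,\mathbb{Z})$ that you obtain from the exponential sequence and Cartan's Theorem B --- and its homotopy $\exp\bigl(2\pi i\bigl((1-t)\lambda_\alpha+t\mu_\alpha\bigr)\bigr)$ is exactly your straight-line homotopy $g\exp\bigl(2\pi i(1-t)h\bigr)$ with $h=\lambda_\alpha-\mu_\alpha$.
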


More explicitly, the conclusion of the theorem is that there is a continuous map $F:X\times I\to\C^*$, where $I=[0,1]$, such that $F(\cdot,0)=f$ and $F(\cdot,1)$ is holomorphic.

We will discuss Stein manifolds in Section \ref{s:stein}.  If you are not familiar with Stein manifolds, take $X$ to be a domain of holomorphy in $\C^n$, or an open Riemann surface, or simply a domain in $\C$.

The theorem is easily proved for domains in $\C$ whose fundamental group is finitely generated.  Let us only consider the example $X=\C\setminus\{a_1,\dots,a_n\}$, where $a_1,\dots,a_n$ are distinct points.  The homotopy class of a continuous map $f:X\to\C^*$ is determined by how many times the image by $f$ of a little circle around each puncture wraps around the origin.  If we denote this winding number for $a_j$ by $k_j\in\mathbb Z$, then $f$ is homotopic to the holomorphic map $z\mapsto \prod\limits_{j=1}^n(z-a_j)^{k_j}$.  

\begin{proof}[Proof of Theorem \ref{t:C-star}]
Each point of $X$ has a neighbourhood on which $f$ has a continuous logarithm, so there is an open cover $(U_\alpha)$ of $X$ such that $f=e^{2\pi i\lambda_\alpha}$ on $U_\alpha$ for some continuous $\lambda_\alpha:U_\alpha\to\C$ (we throw in the factor $2\pi i$ for convenience and suppress the index set through which $\alpha$ runs).  Then, for all $\alpha, \beta$, the function $n_{\alpha\beta}=\lambda_\alpha-\lambda_\beta:U_{\alpha\beta}=U_\alpha\cap U_\beta\to\mathbb Z$ is locally constant because it is continuous and takes values in $\mathbb Z$.  Clearly, $n_{\alpha\beta}+n_{\beta\gamma}=n_{\alpha\gamma}$ on $U_\alpha\cap U_\beta\cap U_\gamma$ (this is called a \textit{cocycle condition}).

Suppose that we could find \textit{holomorphic} functions $\mu_\alpha:U_\alpha\to\C$ such that $n_{\alpha\beta}=\mu_\alpha-\mu_\beta$ on $U_{\alpha\beta}$ for all $\alpha,\beta$.  (We already have such a \textit{splitting} of the \textit{cocycle} $(n_{\alpha\beta})$ by the \textit{continuous} functions $\lambda_\alpha$.)  Then we would get a well-defined holomorphic function $g:X\to\C^*$ by setting $g=e^{2\pi i\mu_\alpha}$ on $U_\alpha$, and the formula
\[ F(x,t)=\exp\big(2\pi i\big((1-t)\lambda_\alpha(x)+t\mu_\alpha(x)\big)\big) \qquad\text{for }(x,t)\in U_\alpha\times I \]
would define a continuous map $F:X\times I\to\C^*$ with $F(\cdot,0)=f$ and $F(\cdot,1)=g$, as desired.

So how do we split the cocycle $(n_{\alpha\beta})$ holomorphically?  First we split it smoothly.  We choose a partition of unity $(\phi_\alpha)$ subordinate to the open cover $(U_\alpha)$.  This means that for each $\alpha$ we have a smooth function $\phi_\alpha:X\to I$ such that:
\begin{itemize} 
\item  the support $\overline{\{x\in X:\phi_\alpha(x)\neq 0\}}$ of $\phi_\alpha$ is contained in $U_\alpha$,
\item  every point of $X$ has a neighbourhood on which $\phi_\alpha$ is identically zero for all but finitely many $\alpha$,
\item  $\sum\limits_\alpha\phi_\alpha=1$ on $X$.
\end{itemize}
For each $\alpha, \gamma$, we note that $n_{\alpha\gamma}\phi_\gamma$ is a well-defined smooth function on $U_\alpha$, so the sum $\nu_\alpha=\sum\limits_\gamma n_{\alpha\gamma}\phi_\gamma$ is a well-defined smooth function on $U_\alpha$.  Also, by the cocycle condition,
\[ \nu_\alpha-\nu_\beta = \sum_\gamma(n_{\alpha\gamma}-n_{\beta\gamma})\phi_\gamma = \sum_\gamma n_{\alpha\beta}\phi_\gamma = n_{\alpha\beta} \sum_\gamma \phi_\gamma = n_{\alpha\beta} \qquad\text{on }U_{\alpha\beta}.\]
From this smooth splitting we can obtain a holomorphic splitting.  Namely, since $n_{\alpha\beta}$ is locally constant, we have $\bar\partial\nu_\alpha-\bar\partial\nu_\beta=\bar\partial n_{\alpha\beta}=0$ on $U_{\alpha\beta}$, so we get a well-defined smooth $(0,1)$-form $\theta$ on $X$, defined as $\bar\partial\nu_\alpha$ on $U_\alpha$ for each $\alpha$.  Since $X$ is Stein, there is a smooth function $u:X\to\C$ with $\bar\partial u = \theta$.  Finally, set $\mu_\alpha=\nu_\alpha-u$.  Then $\mu_\alpha$ is holomorphic on $U_\alpha$ and $\mu_\alpha-\mu_\beta=(\nu_\alpha-u)-(\nu_\beta-u)=n_{\alpha\beta}$ on $U_{\alpha\beta}$.
\end{proof}

We used the exponential map $\C\to\C^*$ to linearise the problem.  The exponential map allowed us to reduce the problem to solving the linear equation $\bar\partial u=\theta$, which we can do on a Stein manifold.

If $Y$ is a complex manifold and every continuous map from a Stein manifold into $Y$ is homotopic to a holomorphic map, then we say that $Y$ satisfies the \textit{basic Oka property}, abbreviated BOP.  Theorem \ref{t:C-star} thus says that $\C^*$ satisfies BOP.

\subsection{Approximation and interpolation problems}  Here are two well-known theorems from 19th century complex analysis, concerning a domain $X$ in $\mathbb C$.  

\noindent
\textit{Weierstrass theorem.}  If $S$ is a discrete subset of $X$, then every function $S\to\C$ extends to a holomorphic function $X\to\C$.

\noindent
\textit{Runge approximation theorem.}  If $K$ is a compact subset of $X$ with no holes in $X$,\footnote{By a \textit{hole} of $K$ in $X$ we mean a connected component of $X\setminus K$ that is relatively compact in $X$.} then every holomorphic function $K\to\C$ can be uniformly approximated on $K$ by holomorphic functions $X\to\C$.  (By a holomorphic function $K\to\C$ we mean a holomorphic function on some neighbourhood of $K$.)  

In the formative years of modern complex analysis, in the mid-20th century, these theorems were extended to Stein manifolds $X$ of arbitrary dimension.  The \textit{Cartan extension theorem} generalises $S$ to a closed analytic subvariety of $X$ and says that every holomorphic function $S\to\C$ extends to a holomorphic function $X\to\C$.  The \textit{Oka-Weil approximation theorem} replaces the topological condition that $K$ have no holes in $X$ with the subtle, non-topological condition that $K$ be \textit{holomorphically convex} in $X$ or $\cO(X)$-convex.  This means that for every $x\in X\setminus K$, there is a holomorphic function $f$ on $X$ with $\lvert f(x)\rvert > \max_K\lvert f\rvert$.

We usually consider these theorems as results about Stein manifolds, and of course they are, but we can also view them as expressing properties of the target $\C$.  \textit{We can then formulate them for a general target.}  We need to keep in mind that there may be topological obstructions to solving approximation and interpolation problems.  There is no point in asking for an analytic solution if there is no continuous solution.

Let us say that a complex manifold $Y$ satisfies the \textit{interpolation property} (IP) if for every Stein manifold $X$ with a subvariety $S$, a holomorphic map $S\to Y$ has a holomorphic extension $X\to Y$ if it has a continuous extension.

Say that $Y$ satisfies the \textit{approximation property}\footnote{The terms IP and AP are not standard, although they are very natural.} (AP) if for every Stein manifold $X$ with a holomorphically convex compact subset $K$, a continuous map $X\to Y$ that is holomorphic on $K$ can be uniformly approximated on $K$ by holomorphic maps $X\to Y$.

We can formulate ostensibly much weaker versions of these properties, for very special $X$, $S$, and $K$.

We say that $Y$ satisfies the \textit{convex interpolation property}\footnote{For the reason for using the word \lq\lq convex\rq\rq\ here, see \cite[\S 4]{Larusson2010b}, where this property was introduced.}
(CIP) if, whenever $S$ is a contractible subvariety of $\C^m$ for some $m$, every holomorphic map $S\to Y$ extends to a holomorphic map $\C^m\to Y$.

We say that $Y$ satisfies the \textit{convex approximation property}\footnote{This property was introduced by Forstneri\v c in \cite{Forstneric2006}.} (CAP) if, whenever $K$ is a convex compact subset of $\C^m$ for some $m$, every holomorphic map $K\to Y$ can be uniformly approximated on $K$ by holomorphic maps $\C^m\to Y$.  

So which manifolds satisfy these generalised Cartan and Oka-Weil theorems?  Are there any examples besides $\C$, $\C^2$, $\C^3$, \ldots ?  The following theorem is the only reasonably easy result in this direction.

\begin{theorem}  \label{t:lie-groups}
Every complex Lie group satisfies CAP.
\end{theorem}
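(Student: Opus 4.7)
The plan is to mimic the proof of Theorem \ref{t:C-star} by using the exponential map of $Y$ to linearise the approximation problem, then invoking the classical Oka--Weil theorem for $\C^d$-valued holomorphic maps (which is available because convex compact subsets of $\C^m$ are polynomially convex, hence $\cO(\C^m)$-convex). Let $\mathfrak g$ denote the Lie algebra of $Y$ and let $\exp:\mathfrak g\to Y$ be the exponential map; fix a neighbourhood $V$ of the identity $e\in Y$ on which a holomorphic logarithm $\log:V\to\mathfrak g$ is defined as a local inverse of $\exp$.

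Given $K\subset\C^m$ convex and compact and $f:U\to Y$ holomorphic on an open neighbourhood $U$ of $K$, I would first decompose $f$ into many ``small'' factors that each land in $V$. Fix $z_0\in K$ and, for $s\in[0,1]$ and $z$ in a fixed neighbourhood of $K$, set $h_s(z)=f(z_0+s(z-z_0))$; convexity of $K$ ensures that $h_s$ is defined and holomorphic for all $s\in[0,1]$. For a partition $0=s_0<s_1<\cdots<s_N=1$ of $[0,1]$, define
\[ g_j(z) = h_{s_{j-1}}(z)^{-1}\,h_{s_j}(z), \qquad j=1,\dots,N, \]
so that $f(z)=f(z_0)\cdot g_1(z)\cdots g_N(z)$. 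By uniform continuity of $f$ on a compact neighbourhood of $K$, together with continuity of multiplication and inversion in $Y$, the values $g_j(z)$ tend to $e$ uniformly in $z\in K$ as the mesh of the partition tends to $0$; choose $N$ so large that $g_j(K)\subset V$ for every $j$. Then $\xi_j:=\log\circ\, g_j$ is a holomorphic $\mathfrak g$-valued map on a neighbourhood of $K$.

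Now $\mathfrak g\cong\C^d$ as a complex vector space, so applying Oka--Weil componentwise produces entire maps $\tilde\xi_j:\C^m\to\mathfrak g$ that approximate $\xi_j$ uniformly on $K$ as closely as I wish. Setting
\[ F(z) = f(z_0)\cdot\exp\bigl(\tilde\xi_1(z)\bigr)\cdots\exp\bigl(\tilde\xi_N(z)\bigr) \]
yields a holomorphic map $\C^m\to Y$. Since $N$ is a fixed integer and the group operations and $\exp$ are continuous, a standard uniform-continuity estimate on the compact set $K$ shows that, given $\varepsilon>0$, taking each $\tilde\xi_j$ close enough to $\xi_j$ in sup-norm on $K$ forces $F$ to approximate $f$ uniformly on $K$ to within $\varepsilon$.

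The main obstacle is the nonlinearity introduced by the group law: $\exp$ is a homomorphism only in the abelian case, so a single logarithm cannot linearise $f$ globally. The trick of slicing $f$ into many small factors $g_j$ that each take values in the domain of $\log$ is the essential device that reduces the problem to a purely linear Oka--Weil approximation, which is then the easy input. A minor technical point to check carefully is that the neighbourhoods on which the $g_j$ and $\xi_j$ are holomorphic can all be arranged to contain a single open set $U'\supset K$ on which the estimates are uniform; this follows from the compactness of $K$ and the continuity of the maps $(z,s)\mapsto h_s(z)$.
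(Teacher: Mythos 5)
Your proof is correct and follows essentially the same approach as the paper's: write $f$ as a telescoping product of finitely many factors close to the identity (the paper uses $f_{k/n}(f_{(k-1)/n})^{-1}$ with basepoint $0$, you use $h_{s_{j-1}}^{-1}h_{s_j}$ with a general basepoint $z_0\in K$), take holomorphic logarithms of each factor, approximate them via the classical Oka--Weil theorem, and exponentiate back.
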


\begin{proof}
(Following \cite[Proposition 5.5.1]{Forstneric2011}.)  Let $G$ be a complex Lie group.  The exponential map $\exp:\mathfrak g=T_eG\to G$ is a local biholomorphism at the origin in $\mathfrak g$.  Let $f:U\to G$ be a holomorphic map, where $U$ is a convex neighbourhood of a convex compact subset $K$ of $\C^m$.

If $f(K)$ lies close enough to $e$, then there is a holomorphic map $h:V\to\mathfrak g$, defined on a smaller neighbourhood $V$ of $K$, such that $f=\exp h$.  Approximating $h$ uniformly on $K$ by a holomorphic map $\tilde h:\C^m\to\mathfrak g$ using Oka-Weil gives the approximation $\exp\tilde h:\C^m\to G$ of $f$.

In general, we may assume that $0\in K$ and write $f_t(z)=f(tz)$ for $t\in I$ and $z\in U$.  Then $f_1=f$ and $f_0$ is constant.  For $n\in\N$, use the group structure of $G$ to write
\[ f=f_1\cdot (f_{\frac{n-1}n})^{-1}\cdot f_{\frac{n-1}n} \cdot (f_{\frac{n-2}n})^{-1} \cdots f_{\frac 1 n}\cdot (f_0)^{-1}\cdot f_0. \]
If $n$ is sufficiently large, then each quotient $f_{\frac k n}\cdot (f_{\frac {k-1} n})^{-1}$, $k=1,\ldots,n$, is close enough to $e$ on $K$ that it admits a holomorphic logarithm $h_k:K\to\mathfrak g$.  Approximating each $h_k$ uniformly on $K$ by a holomorphic map $\tilde h_k:\C^m\to\mathfrak g$ gives the approximation
\[ \exp\tilde h_n\cdots\exp\tilde h_1\cdot f_0:\C^m\to G \]
of $f$.
\end{proof}

Again we used the exponential map to linearise the problem.  A similar but somewhat more elaborate proof works for homogeneous manifolds.  Gromov's main innovation in \cite{Gromov1989} was a more general linearisation method that allows us to establish CAP for a considerably larger class of manifolds, the so-called elliptic manifolds (see Section \ref{s:linearisation}).

What about relationships between the properties we have defined?  There is a short proof that interpolation implies approximation \cite{Larusson2005}.  (This is not to say that the Cartan extension theorem implies the Oka-Weil approximation theorem: the proof that IP implies AP uses the Oka-Weil theorem.)

\begin{theorem}  \label{t:ip-implies-ap}
If a complex manifold satisfies IP, then it also satisfies AP.
\end{theorem}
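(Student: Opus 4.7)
My plan is to reduce the approximation problem (AP) to an interpolation problem on a suitably constructed auxiliary Stein manifold, so that IP directly supplies the desired approximating holomorphic map. Let $X$ be Stein, $K\subset X$ an $\cO(X)$-convex compact subset, and $f:X\to Y$ a continuous map that is holomorphic on an open neighbourhood $U$ of $K$; the goal is to approximate $f$ uniformly on $K$ by holomorphic maps $X\to Y$.

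First I would exploit the $\cO(X)$-convexity of $K$ to shrink $U$ to a Stein Runge open neighbourhood $V$ with $K\subset V\subset\overline V\subset U$ (so that $f|_V$ is holomorphic), and to extract, via the scalar Oka-Weil theorem, a finite collection of holomorphic functions $h_1,\dots,h_N\in\cO(X)$ that separate $K$ from $X\setminus V$ in the sense of defining a Weil polyhedron $P=\{x:|h_j(x)|\le 1 \text{ for all }j\}$ with $K\subset P\subset V$.

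Second, I would form the Stein manifold $Z=X\times\C^N$ (with the graph embedding $\iota(x)=(x,h_1(x),\dots,h_N(x))$ realising $X$ as a closed subvariety of $Z$), together with a closed subvariety $S\subset Z$ engineered so that: (i) $f|_V$ induces a holomorphic map $g:S\to Y$; (ii) $f$ itself extends to a continuous map $G:Z\to Y$ with $G|_S=g$; and (iii) any holomorphic extension $\tilde G:Z\to Y$ of $g$ restricts, along the copy $\iota(X)\cong X$ inside $Z$, to a holomorphic map $X\to Y$ uniformly close to $f$ on $K$, with the closeness controlled by the scale of the polyhedron $P$. Applying IP to the data $(Z,S,g,G)$ then yields such a $\tilde G$, and restriction along $\iota$ completes the proof.

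The main obstacle, and the heart of the argument, is the construction of $S$. Since $K$ is not itself a subvariety of $X$, the $\varepsilon$-approximation tolerance on $K$ has to be re-expressed as an exact interpolation condition along a genuine closed subvariety; this is where the non-topological content of $\cO(X)$-convexity enters through scalar Oka-Weil. I anticipate $S$ being assembled as a union of graphs of holomorphic functions tied to the $h_j$, or as two parallel copies of $X$ glued along the polyhedron $P$, chosen so that the interpolation condition on $S$ pins down the restriction of $\tilde G$ to $\iota(X)$ on a ``thickening'' of $K$ inside $V$, from which uniform closeness to $f$ on $K$ follows. Once $S$ is correctly designed, invoking IP is immediate.
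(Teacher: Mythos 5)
Your outline has the right skeleton (auxiliary Stein manifold $Z=X\times\C^N$, apply IP to a subvariety $S\subset Z$, pull the result back to $X$), but the crucial step — the identification of $S$ — is left unresolved, and the candidates you float ("union of graphs," "two parallel copies glued along a polyhedron") are not the right objects and would not work. The paper's trick is both simpler and different: since you already have a Stein open neighbourhood $U\supset K$ on which $f$ is holomorphic, take a \emph{proper} holomorphic embedding $\phi:U\to\C^n$ (possible because $U$ is Stein). Then $j=(i,\phi):U\to M=X\times\C^n$ has closed image $j(U)$ in $M$ precisely because $\phi$ is proper, so $S=j(U)$ \emph{is} a closed subvariety carrying the holomorphic map $f\circ\pi|_{j(U)}$, with continuous extension $f\circ\pi:M\to Y$. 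IP gives a holomorphic $h:M\to Y$ with $h(x,\phi(x))=f(x)$ for $x\in U$. Notice that this is a genuine subvariety and an exact interpolation problem; no polyhedron, gluing, or "re-expressing the $\varepsilon$-tolerance as an interpolation condition" is needed.

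You also omit the concluding step, which is where $\cO(X)$-convexity of $K$ actually enters. Your map $\iota(x)=(x,h_1(x),\dots,h_N(x))$ embeds $X$ as a closed subvariety of $Z$, but $f$ is not holomorphic on all of $X$, so nothing about $f$ lives naturally on $\iota(X)$; and restricting a holomorphic extension along $\iota$ gives no control of closeness to $f$ on $K$. In the paper's argument, $K$ being $\cO(X)$-convex is used at the end via Oka--Weil: approximate $\phi:U\to\C^n$ uniformly on $K$ by a global holomorphic $\psi:X\to\C^n$, and then $h\circ(\id_X,\psi):X\to Y$ is holomorphic and uniformly close to $f$ on $K$, because $h(x,\psi(x))\approx h(x,\phi(x))=f(x)$ there. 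In short: properness of the embedding (to make $U$ a closed subvariety of the auxiliary Stein manifold) and Oka--Weil approximation of the embedding itself (to come back to $X$) are the two ideas that your sketch is missing; without them, "restriction along $\iota$" does not produce an approximant.
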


Similarly, CIP implies CAP.

\begin{proof}
Suppose $Y$ satisfies IP.  Let $X$ be a Stein manifold, $K\subset X$ be compact and holomorphically convex, and $f:X\to Y$ be continuous, and holomorphic on a Stein neighbourhood $U$ of $K$.  (A holomorphically convex compact subset of a Stein manifold has a Stein neighbourhood basis.)  Let $\phi:U\to\C^n$ be a holomorphic embedding (see Theorem \ref{t:stein}; by an embedding we always mean a proper embedding).  The inclusion $i:U\hookrightarrow X$ factors through the Stein manifold $M=X\times\C^n$ as $U\stackrel{j}{\to} M \stackrel{\pi}{\to} X$, where $j=(i,\phi)$ is an embedding and $\pi$ is the projection.

Now $f\circ\pi:M\to Y$ is continuous, and holomorphic on $j(U)$.  Since $Y$ satisfies IP, there is a holomorphic extension $h:M\to Y$ of $f\circ\pi\vert j(U)$.  Since $K$ is holomorphically convex in $X$, by Oka-Weil we can approximate $\phi:U\to\C^n$ uniformly on $K$ by a holomorphic map $\psi:X\to\C^n$.  Then $h\circ (\id_X, \psi):X\to Y$ is holomorphic and approximates $f$ uniformly on $K$.
\end{proof}

In fact, the converse is true---AP implies IP---but this is not easy to see.  It is a major theorem of Forstneri\v c that CAP implies IP, so CAP, CIP, AP, and IP are equivalent, and that CAP implies BOP (see Section \ref{s:forstneric}).  In fact, Forstneri\v c proved that CAP implies a property that is stronger than both IP and BOP and is defined as follows.

We say that a complex manifold $Y$ satisfies the \textit{basic Oka property with approximation and interpolation} (BOPAI) if for every Stein manifold $X$ with a subvariety $S$ and a holomorphically convex compact subset $K$, a continuous map $f:X\to Y$ that is holomorphic on $S$ and on $K$ can be deformed to a holomorphic map, keeping it fixed on $S$ and almost fixed on $K$.  More precisely,  for every $\epsilon>0$, there is a continuous map $F:X\times I\to Y$ such that:
\begin{itemize}
\item $F(\cdot,0)=f$,
\item $F(\cdot,1)$ is holomorphic,
\item $F(x,t)=f(x)$ for all $x\in S$ and $t\in I$,
\item $d(F(x,t), f(x))<\epsilon$ for all $x\in K$ and $t\in I$.
\end{itemize}
Here, $d$ is any metric defining the topology of $Y$.

We now define an \textit{Oka manifold} to be a complex manifold satisfying the equivalent properties CAP, CIP, AP, IP, and BOPAI.\footnote{Oka manifolds were first defined in \cite{Forstneric2009} and \cite[\S 16]{Larusson2004}.  The concept of an Oka manifold is unusual in that a big theorem is needed in order to define it in a satisfactory manner.}

By Theorem \ref{t:lie-groups}, every complex Lie group (and every homogeneous manifold) is Oka.  More generally, every elliptic manifold is Oka (Corollary \ref{t:ell-implies-Oka}).

Note that BOP is BOPAI with $S$ and $K$ empty.  

\begin{exercise}  \label{x:disc}
Show that the disc $\D$ satisfies BOP but is not Oka.
\end{exercise}


\section{Stein manifolds}  
\label{s:stein}

\noindent
It follows easily from the Weierstrass theorem that every domain in $\C$ is a domain of holomorphy: it carries a holomorphic function that does not extend to any larger domain (even as a multivalued function).  This is far from true in higher dimensions.  For example, every holomorphic function $f$ on $\C^2\setminus\{0\}$ extends to a holomorphic function on $\C^2$ (Hurwitz 1897).  Namely, the function $\C\times\D\to\C$ defined by
\[ (z,w) \mapsto \frac 1{2\pi i}\int_{\lvert\zeta\rvert=1}\frac{f(z,\zeta)}{\zeta-w}\,d\zeta \]
is holomorphic and agrees with $f$ on $\C^*\times\D$ by Cauchy's formula, so it agrees with $f$ on $\C\times\D\setminus\{0\}$.  By a more elaborate, but still elementary, application of Cauchy's formula, one can show that if $K$ is a compact subset of a domain $\Omega$ in $\C^n$, $n\geq 2$, such that $\Omega\setminus K$ is connected, then every holomorphic function on $\Omega\setminus K$ extends to a holomorphic function on $\Omega$ \cite{Sobieszek2003}.

In higher dimensions, the notion of a domain of holomorphy is quite subtle and is indeed the central concept of classical several complex variables.  The notion of a Stein manifold, introduced by Stein in 1951, generalises domains of holomorphy to the setting of complex manifolds.

A complex manifold $X$ is said to be \textit{Stein} if it satisfies the following three conditions.
\begin{enumerate}
\item[(a)]  Holomorphic functions on $X$ \textit{separate points}, that is, if $x,y\in X$, $x\neq y$, then there is $f\in\cO(X)$ such that $f(x)\neq f(y)$.  Here, $\cO(X)$ denotes the algebra of holomorphic functions on $X$.
\item[(b)]  Holomorphic functions on $X$ \textit{separate directions}, that is, for every $x\in X$, there are $f_1,\ldots,f_n\in\cO(X)$ that give coordinates at $x$ (so $n=\dim X$). 
\item[(c)]  $X$ is \textit{holomorphically convex}, that is, if $K\subset X$ is compact, then its $\cO(X)$-hull
\[ \hat K = \{ x\in X:\lvert f(x)\rvert\leq \max_K\lvert f\rvert \textrm{ for all }f\in\cO(X)\} \]
is also compact.  Equivalently, if $E\subset X$ is not relatively compact, then there is $f\in\cO(X)$ such that $f\vert E$ is unbounded.
\end{enumerate}
There is some redundancy among these properties.  It may be shown that (a) follows from (b) and (c), and that (b) follows from (a) and (c).

Just as every domain in $\C$ is a domain of holomorphy, every open Riemann surface is Stein.

As befits a notion of fundamental importance, there are several alternative characterisations of the Stein property.

\begin{theorem}  \label{t:stein}
For a complex manifold $X$, the following are equivalent.
\begin{enumerate}
\item[(i)]  $X$ is Stein.
\item[(ii)]  $X$ is biholomorphic to a closed complex submanifold of $\C^m$ for some $m$.
\item[(iii)]  $X$ is {\em strictly pseudoconvex}, that is, there is a smooth strictly plurisubharmonic function $\psi:X\to\R$  that is an exhaustion in the sense that for every $c\in\R$, the sublevel set $\{x\in X:\psi(x)<c\}$ is relatively compact in $X$.  The closure of each sublevel set is then holomorphically convex.
\item[(iv)]  $H^p(X,\mathscr F)=0$ for every coherent analytic sheaf $\mathscr F$ on $X$ and every $p\geq 1$.
\end{enumerate}
\end{theorem}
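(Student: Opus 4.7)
The plan is to prove the four conditions equivalent by running the cycle
(ii)$\Rightarrow$(i)$\Rightarrow$(iii)$\Rightarrow$(iv)$\Rightarrow$(i), and separately deducing (ii) from the other three. The implication (ii)$\Rightarrow$(i) is the easy base: $\C^m$ is Stein (coordinate functions separate points and directions, and any unbounded set contains a point where some coordinate is unbounded), and all three defining properties of \textbf{Stein} restrict immediately to a closed complex submanifold, since restrictions of global holomorphic functions on $\C^m$ remain global holomorphic functions on the submanifold.

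For (i)$\Rightarrow$(iii) I would build the strictly plurisubharmonic exhaustion $\psi$ by hand from holomorphic convexity. First exhaust $X$ by holomorphically convex compacts $K_1\subset K_2\subset\cdots$ with $K_j\subset\operatorname{int}K_{j+1}$ (possible because the $\cO(X)$-hull of any compact set is compact). Separation of points and directions plus the Oka--Weil-type convexity argument yields, for each $j$, finitely many functions $f_{j,1},\dots,f_{j,N_j}\in\cO(X)$ of modulus $<1$ on $K_j$ but with $\sum_k|f_{j,k}|^2>j$ somewhere on $K_{j+2}\setminus\operatorname{int}K_{j+1}$. The function
\[ \psi_0 = \sum_{j\geq 1} \sum_{k=1}^{N_j} |f_{j,k}|^2 \]
converges locally uniformly and is plurisubharmonic and exhausting. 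Adding $|z_1|^2+\cdots+|z_n|^2$ in suitable local coordinates via a partition of unity (after absorbing enough additional squared holomorphic functions separating directions at each point) upgrades it to be strictly plurisubharmonic. The closure of each sublevel set is holomorphically convex because $e^{\psi}$ dominates all functions in the finite families used to define $\psi$.

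The main obstacle is (iii)$\Rightarrow$(iv), which is Cartan's Theorem~B. The cleanest route is via H\"ormander's $L^2$ method. Given a smooth strictly psh exhaustion $\psi$, one solves the inhomogeneous Cauchy--Riemann equation $\bar\partial u=\theta$ for every $\bar\partial$-closed $(0,q)$-form $\theta$ with values in a holomorphic vector bundle, in weighted $L^2$ spaces with weight $e^{-\varphi(\psi)}$ for a rapidly increasing convex $\varphi$, using the Bochner--Kodaira--Nakano identity and the positivity forced by $\psi$. This handles the case $\mathscr F=\cO$ and more generally any locally free sheaf. For an arbitrary coherent $\mathscr F$, one uses Cartan's Theorem~A (every coherent $\mathscr F$ is generated at each point by global sections, proved together with Theorem~B by induction on $q$ using local free resolutions and the $\bar\partial$-vanishing) to express $\mathscr F$ locally as a quotient of $\cO^N$ and push the vanishing through the long exact sequence. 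This is genuinely the deep content of the theorem; everything else is formal around it.

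The remaining arrows use Theorem~B. For (iv)$\Rightarrow$(i), given $x\neq y$, apply the vanishing of $H^1$ of the ideal sheaf $\mathscr I_{\{x,y\}}$ in the short exact sequence $0\to\mathscr I_{\{x,y\}}\to\cO\to\cO/\mathscr I_{\{x,y\}}\to 0$ to separate points; the same trick with $\mathscr I_x^2$ produces coordinates at $x$; and if some closed non-compact $E$ had every $f\in\cO(X)$ bounded on it, one would extract a boundary point in a non-Hausdorff completion and contradict the solvability of suitable Cousin-type cocycles provided by Theorem~B on a neighbourhood basis. For (i)$\Rightarrow$(ii), one uses the exhaustion by holomorphically convex compacts together with Theorem~B (or directly the separation of points and directions plus Oka--Weil) to produce, for each $K_j$, finitely many global holomorphic functions giving an embedding of a neighbourhood of $K_j$ into some $\C^{m_j}$; a diagonal argument and shrinking/approximation (due to Remmert, Bishop, Narasimhan) packages these into a proper holomorphic embedding $X\hookrightarrow\C^m$ with $m=2\dim X+1$. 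The $L^2$ machinery in step~(iii)$\Rightarrow$(iv) is the one place where real analytic work, rather than soft homological reasoning, is unavoidable.
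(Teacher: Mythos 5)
Your decomposition differs from the paper's.  The paper runs the cycle (i)$\Rightarrow$(ii)$\Rightarrow$(iii)$\Rightarrow$(iv)$\Rightarrow$(i), where (ii)$\Rightarrow$(iii) is immediate (take $\psi(x)=\lVert x\rVert^2$ on the submanifold); since the embedding theorem (i)$\Rightarrow$(ii) is needed anyway to get (ii) into the equivalence, this ordering gives the strictly psh exhaustion for free.  You instead prove (i)$\Rightarrow$(iii) directly by building $\psi$ from holomorphic convexity, and then separately prove (i)$\Rightarrow$(ii), so you do genuinely extra work.  That extra work is not wasted pedagogically---the direct construction of a psh exhaustion from the Stein axioms is a classical and instructive argument---but as a route to the equivalence it is redundant given that the embedding theorem must be cited regardless.

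Two steps in your sketch need tightening.  In (i)$\Rightarrow$(iii), multiplying local $|z_1|^2+\cdots+|z_n|^2$ terms by a partition of unity and summing does \emph{not} produce a plurisubharmonic function: the complex Hessian of $\chi\cdot|z|^2$ picks up terms in $\partial\bar\partial\chi$ of both signs.  The correct device is the one in your parenthetical, made primary: by separation of directions, for each compact $K_j$ choose finitely many $g\in\cO(X)$ whose combined differential is injective on a neighbourhood of $K_j$, and add the terms $|g|^2$ with small enough coefficients into the series; since these are globally defined squared moduli of holomorphic functions, plurisubharmonicity is preserved and strictness is achieved without any cutoffs.  (This also makes the claim about holomorphic convexity of the sublevel closures immediate, since $\psi$ is a convergent sum of $|f|^2$ with $f\in\cO(X)$; invoking $e^\psi$ is a red herring because $e^\psi$ is not holomorphic.)  In (iv)$\Rightarrow$(i), the holomorphic-convexity step via ``a boundary point in a non-Hausdorff completion and Cousin-type cocycles'' is not an argument.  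The standard route, matching the paper's sketch for separation of points, is: if $\hat K$ were non-compact, pick a sequence $(x_n)$ in $\hat K$ with no accumulation point in $X$; the ideal sheaf $\mathscr I_D$ of the discrete closed set $D=\{x_n\}$ is coherent, and vanishing of $H^1(X,\mathscr I_D)$ gives a global $f\in\cO(X)$ with $f(x_n)=n$, which is unbounded on $\hat K$ and contradicts $\lvert f\rvert\le\max_K\lvert f\rvert$ there.  Replace the vague passage with this.
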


The implication (i) $\Rightarrow$ (ii) (with $m=2\dim X+1$) is the embedding theorem of Remmert, Bishop, and Narasimhan.  The implication (i) $\Rightarrow$ (iv) is Cartan's Theorem B.

\begin{proof}[Brief comments on proofs.]
(i) $\Rightarrow$ (ii):  H\"ormander's account in \cite[\S 5.3]{Hormander1979} has not been improved upon.

(ii) $\Rightarrow$ (iii):  This is easy: let $\psi(x)=\lVert x\rVert^2$, where $\lVert\cdot\rVert$ is the Euclidean norm on $\C^m$.

(iii) $\Rightarrow$ (iv):  This is a major theorem.  A proof using the $L^2$ method of H\"ormander and of Andreotti and Vesentini is given in \cite[Corollary IX.4.11]{Demailly2012} and \cite[Theorem 7.4.3]{Hormander1979}.  For a very readable introduction to the $L^2$ method in complex analytic and differential geometry, see \cite{Berndtsson2010}.

(iv) $\Rightarrow$ (i):  To verify (a), for example, let $\mathscr I$ be the sheaf of holomorphic functions on open subsets of $X$ that vanish at $x$ and $y$.  Then $\mathscr I$ is a coherent sheaf of ideals in the structure sheaf $\cO$ of $X$.  The quotient $\cO/\mathscr I$ is a \lq\lq skyscraper sheaf\rq\rq\ with stalk $\C$ at $x$ and $y$ and zero elsewhere.  The short exact sequence $0\to\mathscr I\to\cO\to\cO/\mathscr I\to 0$ gives the long exact sequence $\cdots\to\cO(X)\to \C^2 \to H^1(X,\mathscr I)\to\cdots$.  By (iv), $\cO(X)\to \C^2$ is surjective, so we can prescribe the values of holomorphic functions on $X$ at $x$ and~$y$.  The proof of (b) and (c) is similar.
\end{proof}

It is natural to ask for the smallest $m$ that works in (ii).  Forster conjectured that if $n=\dim X\geq 2$, then the smallest possible $m$ is $[3n/2]+1$.  He showed by examples that for each $n\geq 2$, no smaller value of $m$ works in general.  Forster's conjecture was proved in the early 1990s by Eliashberg and Gromov and by Sch\"urmann.  The proof relies on Gromov's Oka principle.  It is still a wide open question whether every open Riemann surface embeds in $\C^2$.

Cartan's Theorem B is a very powerful result.  For example, it implies the Cartan extension theorem (that is, IP for $\C$) as soon as we know that the sheaf of holomorphic functions that vanish on the subvariety $S$ is coherent (this is a fairly deep theorem of local analytic geometry).  Here is another consequence that will be useful later.

\begin{proposition}  \label{p:vector-bundles}
(a)  Every short exact sequence of holomorphic vector bundles on a Stein manifold splits.

(b)  Every holomorphic vector bundle on a Stein manifold is a direct summand in a trivial vector bundle.
\end{proposition}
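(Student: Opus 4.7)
The plan for (a) is to produce a splitting by lifting $\id_G$ across the surjection $F\to G$. Since $G$ is locally free of finite rank, tensoring with $G^*$---equivalently, applying $\Hom(G,-)$---is exact, so we obtain a short exact sequence of coherent analytic sheaves
\[ 0\to\Hom(G,E)\to\Hom(G,F)\to\Hom(G,G)\to 0. \]
The associated long exact cohomology sequence contains
\[ H^0(X,\Hom(G,F))\to H^0(X,\Hom(G,G))\to H^1(X,\Hom(G,E)), \]
and the rightmost group vanishes by Theorem \ref{t:stein}(iv). Hence $\id_G\in H^0(X,\Hom(G,G))$ lifts to a global homomorphism $s\colon G\to F$, which is the required splitting.

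For (b), the plan is to realise the given holomorphic vector bundle $E$ on $X$ as a quotient of a trivial bundle and then apply (a). If we can produce holomorphic sections $s_1,\ldots,s_N$ of $E$ that span every fibre, then the evaluation map $\cO_X^N\to E$, $(c_1,\ldots,c_N)\mapsto\sum c_j s_j$, is a surjection with locally free kernel $K$. Part (a) applied to $0\to K\to\cO_X^N\to E\to 0$ then yields $\cO_X^N\cong E\oplus K$, as desired.

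The substance therefore lies in producing finitely many globally spanning sections, and this is where I expect the main difficulty. Cartan's Theorem A---a companion to Theorem \ref{t:stein}(iv), proved by the same kind of cohomology argument used in (iv)$\Rightarrow$(i), applied to the ideal sheaf of a point tensored with $E$---asserts that $H^0(X,E)$ generates the stalk $E_x$ for every $x\in X$, hence spans each fibre. Compactness then gives finite generation on any compact subset of $X$. To upgrade this to \emph{global} finite generation on a non-compact Stein manifold, one exhausts $X$ by a sequence of holomorphically convex compacta produced by Theorem \ref{t:stein}(iii) and uses Oka--Weil approximation to patch local spanning families into a globally spanning finite family. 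A quantitative refinement due to Forster shows that in fact $N=\dim X+\mathrm{rank}\,E$ suffices, but any finite $N$ will do for this proposition.
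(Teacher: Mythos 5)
Part (a) is essentially identical to the paper's argument: apply $\Hom(G,-)$ (exact since $G$ is locally free) to the short exact sequence, then kill $H^1(X,\mathscr{H}\!\textit{om}(G,E))$ by Cartan's Theorem~B to lift $\id_G$ to a splitting.

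Part (b) follows the paper's outline as well (realise $E$ as a quotient of a trivial bundle, then invoke (a)), but the step you flag as the main difficulty contains a gap. Cartan's Theorem~A plus compactness gives, for each compact $K\subset X$, finitely many global sections spanning every fibre over $K$, but exhaustion and Oka--Weil patching on their own do not produce a single finite family spanning on all of $X$: the straightforward exhaustion argument accumulates more and more sections as the compacta grow, and there is no obvious way to merge the families into one of bounded size just by approximation. What makes finite global generation possible is precisely the Forster--Ramspott transversality argument, which shows that $\dim X+\mathrm{rank}\,E$ generic $\C$-linear combinations of a (countable) spanning family already span everywhere. So the reference to Forster is not merely a ``quantitative refinement'' giving a smaller $N$; it is the ingredient that yields a finite $N$ at all. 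That said, the paper is equally terse on this point (``similar to (iv)~$\Rightarrow$~(i)'' addresses local generation, not finite global generation), so the approach matches, but you should be aware that the exhaustion-plus-Oka--Weil sketch does not by itself close the argument.
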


\begin{proof}
(a)  That a short exact sequence $0\to K\to F\to E\to 0$ of vector bundles on a Stein manifold $X$ splits follows from the long exact sequence
\[ \cdots\to\Hom (E,F)\to\Hom (E,E)\to H^1(X,\mathscr H\textit{\!om}(E,K))=0 \to\cdots. \]

(b)  Let $E$ be a vector bundle on $X$.  Then $E$ is a quotient of a trivial bundle $F$.  In other words, there are finitely many sections of $E$ that generate $E$ at each point.  The proof is similar to the proof that (iv) $\Rightarrow$ (i) in Theorem \ref{t:stein} above.  Now apply (a).
\end{proof}

For a proof of the Oka-Weil approximation theorem (that is, AP for $\C$) using the $L^2$ method, see \cite[Corollary 5.2.9]{Hormander1979}.  By Proposition \ref{p:vector-bundles}(b), Oka-Weil approximation holds for sections of vector bundles over Stein manifolds (and so does Cartan extension).

In the setting of Proposition \ref{p:vector-bundles}(a), a splitting morphism $E\to F$ over an open subset of $X$ is a section of an affine bundle, which has a section over $X$ and is thus a vector bundle.  Hence Oka-Weil approximation holds for splitting morphisms.


\section{Gromov's linearisation method}
\label{s:linearisation}

\noindent
In his seminal paper of 1989 \cite{Gromov1989}, Gromov introduced a useful geometric structure that generalises the exponential map of a complex Lie group and allows us to establish the convex approximation property.

\subsection{Elliptic manifolds}  A \textit{spray} on a complex manifold $Y$ is a holomorphic map $s:E\to Y$ defined on the total space of a holomorphic vector bundle $E$ over $Y$ such that $s(0_y)=y$ for all $y\in Y$.  The spray is said to be \textit{dominating at} $y\in Y$ if $s\vert E_y\to Y$ is a submersion at $0_y$.  The spray is said to be \textit{dominating} if it is dominating at every point of $Y$.  Finally, $Y$ is said to be \textit{elliptic} if it admits a dominating spray.

\begin{example}  \label{e:elliptic}
(a)  If $Y$ is homogeneous, that is, there is a transitive holomorphic action of a complex Lie group $G$ on $Y$, then $Y$ is elliptic.  The map $Y\times\mathfrak g\to Y$, $(y,v)\mapsto \exp(v)\cdot y$, where $\mathfrak g$ is the Lie algebra of $G$, is a dominating spray defined on a trivial vector bundle over $Y$.  In particular, a complex Lie group is elliptic.

(b)  More generally, if $Y$ carries finitely many $\C$-complete holomorphic vector fields $v_1,\ldots,v_k$ that span $T_yY$ at each $y\in Y$, then the map $s:Y\times\C^k\to Y$,
\[ s(y,t_1,\ldots,t_k)=\phi_k^{t_k}\circ\cdots\circ\phi_1^{t_1}(y), \]
where $\phi_j^t$ is the flow of $v_j$, is a dominating spray on $Y$.  Note that
\[ \frac\partial{\partial t_j}s(y,0) = v_j(y), \qquad y\in Y. \]

(c)  \cite[0.5.B(iii)]{Gromov1989}  If $A$ is an algebraic subvariety of $\C^n$ of codimension at least 2, then $Y=\C^n\setminus A$ is as in (b).  Namely, we take the vector fields $v_1, \ldots, v_k$ to be of the form $v(z)=f(\pi(z))b$, where:
\begin{itemize}
\item  $b\in\C^n\setminus\{0\}$,
\item  $\pi:\C^n\to\C^{n-1}$ is a linear projection with $\pi(b)=0$ such that $\pi\vert A$ is proper,
\item  $f:\C^{n-1}\to\C$ is a polynomial that vanishes on the subvariety $\pi(A)$.
\end{itemize}

\begin{exercise}  \label{x:Gromov's-example}
Show that the flow of $v$ is given by $\phi^t(z)=z+tf(\pi(z))b$.  In particular, $v$ is $\C$-complete.  The flow fixes $A$ pointwise, so it restricts to a complete flow on $Y$.  Show that there is enough freedom to choose $b$, $\pi$, and $f$ that finitely many vector fields of this form span the tangent space $T_z\C^n$ at every point $z\in Y$.   
\end{exercise}

We say that $Y$ is \textit{algebraically elliptic} because the dominating spray that we have produced is algebraic.  (It is of course unusual for the flow of a $\C$-complete algebraic vector field to be algebraic.)

This example plays a key role in the proof of Forster's conjecture.

The assumption that $A$ is algebraic cannot be removed (although it can be relaxed to $A$ being a \textit{tame}\footnote{This means that there is an automorphism $\Phi$ of $\C^n$ such that $\Phi(A)$ does not accumulate at every point of the hyperplane at infinity.} analytic subvariety \cite[Proposition 5.5.14]{Forstneric2011}).  Rosay and Rudin showed that if $n\geq 2$, then there is a closed discrete set in $\C^n$ which is unavoidable by nondegenerate holomorphic maps from $\C^n$ to $\C^n$ \cite[Theorem 4.5]{RosayRudin1988}.
  
(d)  Hypersurfaces
\[ \{(z_1,\ldots,z_n,u,v)\in\C^{n+2}: uv=f(z_1,\ldots,z_n)\} \]
in $\C^{n+2}$, where $f\in\cO(\C^n)$ has $df\neq 0$ at every point of $f^{-1}(0)$, are are as in (b) \cite[Theorem 2]{KalimanKutzschebauch2008}.  These hypersurfaces are of interest because of their relevance to some deep open questions \cite[\S 4]{KalimanKutzschebauch2008}.

Clearly, the examples in (c) and (d) are only homogeneous in exceptional cases.
\end{example}

\begin{exercise}  \label{x:Riemann-elliptic}
Show that a Riemann surface is elliptic if and only if it is not covered by~$\D$.
\end{exercise}

Known ways to produce new elliptic manifolds from old are very limited.

\begin{proposition}  \label{p:new-elliptic-from-old}
(a)  If $Y_1$ and $Y_2$ are elliptic, then so is $Y_1\times Y_2$.

(b)  If $X \to Y$ is an unbranched holomorphic covering map and $Y$ is elliptic, then $X$ is elliptic.
\end{proposition}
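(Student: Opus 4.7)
For part (a), the plan is to simply take the \emph{sum} of two dominating sprays. Let $s_i : E_i \to Y_i$ be dominating sprays for $i=1,2$, and let $p_i : Y_1\times Y_2 \to Y_i$ be the projections. I would form the pullback bundle $E := p_1^*E_1 \oplus p_2^*E_2$ over $Y_1\times Y_2$ and define
\[ s\bigl((y_1,y_2),(v_1,v_2)\bigr) := \bigl(s_1(y_1,v_1),\, s_2(y_2,v_2)\bigr). \]
Then $s(0_{(y_1,y_2)})=(y_1,y_2)$, and since $ds\vert_{0_{(y_1,y_2)}}$ decomposes as the direct sum of $ds_1\vert_{0_{y_1}}$ and $ds_2\vert_{0_{y_2}}$, both surjective by hypothesis, it is surjective onto $T_{(y_1,y_2)}(Y_1\times Y_2)=T_{y_1}Y_1\oplus T_{y_2}Y_2$. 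No real obstacle here.

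For part (b), the idea is to pull back a dominating spray $s : E \to Y$ along the covering $\pi : X \to Y$. Form the pullback bundle $\pi^*E \to X$ with its natural map $q : \pi^*E \to E$, and consider the holomorphic map $s\circ q : \pi^*E \to Y$. The goal is to lift $s\circ q$ through $\pi$ to a holomorphic map $\tilde s : \pi^*E \to X$, which will then be the desired spray.

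The main obstacle is obtaining the lift: this needs the covering space lifting criterion. I would apply it as follows. The total space $\pi^*E$ is a connected, locally path-connected complex manifold, and the scaling homotopy $(v,t)\mapsto tv$ gives a strong deformation retraction of $\pi^*E$ onto the zero section, which can be identified with $X$. On this zero section, $(s\circ q)(0_x) = s(0_{\pi(x)}) = \pi(x)$, so $s\circ q$ is homotopic to a map whose image of $\pi_1(\pi^*E) = \pi_1(X)$ lands in $\pi_*(\pi_1(X))$. Hence the lifting criterion provides a continuous lift $\tilde s : \pi^*E \to X$ with $\pi\circ\tilde s = s\circ q$, normalized so that $\tilde s(0_x)=x$ for all $x\in X$. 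Since $\pi$ is locally biholomorphic and $s\circ q$ is holomorphic, $\tilde s$ is automatically holomorphic.

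It remains to check that $\tilde s$ is dominating. Restricting $\pi\circ\tilde s = s\circ q$ to the fiber $(\pi^*E)_x = E_{\pi(x)}$ and differentiating at $0_x$, I get
\[ d\pi_x\circ (d\tilde s\vert_{E_{\pi(x)}})_{0_x} = (ds\vert_{E_{\pi(x)}})_{0_{\pi(x)}}. \]
The right-hand side is surjective onto $T_{\pi(x)}Y$ because $s$ is dominating, and $d\pi_x : T_xX \to T_{\pi(x)}Y$ is an isomorphism because $\pi$ is a local biholomorphism; therefore $(d\tilde s\vert_{E_{\pi(x)}})_{0_x}$ is surjective onto $T_xX$, as needed.
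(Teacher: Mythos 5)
Your proof is correct and takes essentially the same approach as the paper: for (a), the direct sum of pulled-back spray bundles; for (b), lifting the composite $s\circ q:\pi^*E\to Y$ through the covering to obtain the spray on $X$. You spell out the covering-space lifting criterion (using the deformation retraction of $\pi^*E$ onto its zero section to control $\pi_1$), which the paper leaves implicit in its commutative-square diagram.
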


\begin{proof}
(a)  Let $\pi_j:Y_1\times Y_2\to Y_j$, $j=1,2$, be the projections.  If $s_j:E_j\to Y_j$ is a dominating spray, then 
\[ \pi_1^*E_1\oplus \pi_2^*E_2\to Y_1\times Y_2, \quad (v_1,v_2)\mapsto (s_1(v_1), s_2(v_2)), \] 
is a dominating spray.

(b)  Let $p:X\to Y$ be a covering map and $s:E\to Y$ be a dominating spray.  Let $p^*E\to X$ be the pullback by $p$ of the spray bundle $E\to Y$.  We obtain a dominating spray on $X$ with spray bundle $p^*E$ as a lifting in the following square.
\[ \xymatrix{
X \ar@{=}[r] \ar@{^{(}->}[d] & X \ar[d]^p \\ p^*E \ar@{-->}[ur] \ar[r] & Y
} \]
The left vertical map is the inclusion of the zero section $x\mapsto (x, 0_{p(x)})$.  The bottom map is $(x,v)\mapsto s(v)$.
\end{proof}

There are weaker properties called \textit{subellipticity} and \textit{weak subellipticity}.  Subellipticity of $Y$ requires finitely many sprays that together dominate at each point of $Y$.  Weak subellipticity requires countably many sprays that together dominate at each point.  There are many examples of subelliptic and weakly subelliptic manifolds that are not known to be elliptic \cite[\S 5.5, \S 6.4]{Forstneric2011}.

\subsection{Ellipticity implies CAP}  We will now explain the workings of Gromov's linearisation method.  The idea is to represent maps by sections of vector bundles and use the Oka-Weil approximation theorem for such sections.\footnote{That ellipticity implies CAP is usually proved by showing that an elliptic manifold satisfies the \textit{homotopy Runge property} (see e.g.\ \cite[Theorem 2.22]{Forstneric2013}).  We present a variant of this property, because we find the variant attractive and also to offer something not contained in other references.}

Let $\Omega$ be a Stein domain in a Stein manifold $X$.  Let $\rho_\Omega^X:\cO(X)\to\cO(\Omega)$ be the restriction map.  We say that $\Omega$ is \textit{Runge} if the image of $\rho_\Omega^X$ is dense in $\cO(\Omega)$ (with respect to the compact-open topology).  Equivalently, $\Omega$ can be exhausted by compact subsets that are $\cO(X)$-convex (and not merely $\cO(\Omega)$-convex).  For example, if $\Omega$ is a sublevel set of a smooth strictly plurisubharmonic exhaustion of $X$, then $\Omega$ is Runge.

\begin{theorem}  \label{t:ell-implies-Runge}
Let $Y$ be an elliptic manifold and let $\Omega$ be a Runge domain in a Stein manifold $X$.  Let $\rho_\Omega^X:\cO(X,Y)\to\cO(\Omega,Y)$ be the restriction map.  The closure of the image of $\rho_\Omega^X$ is the union of some of the path components of $\cO(\Omega,Y)$.
\end{theorem}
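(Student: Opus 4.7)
The strategy is to show that $\overline{\rho_\Omega^X(\cO(X,Y))}$ is saturated under the path-component relation in $\cO(\Omega,Y)$: whenever it contains a map $f$, it contains every $f'$ joined to $f$ by a continuous path in $\cO(\Omega,Y)$. Being a union of path components then follows tautologically. The driving mechanism is a local linearisation using a dominating spray $s:E\to Y$. Form the holomorphic map $\Phi:E\to Y\times Y$, $v\mapsto(\pi(v),s(v))$, where $\pi$ is the bundle projection. Along the zero section, the zero-section direction contributes the diagonal $\{(v,v):v\in T_yY\}$ and the fibre direction contributes $\{(0,w):w\in T_yY\}$ by domination, so $\Phi$ is a submersion there. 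The holomorphic implicit function theorem then produces a holomorphic section $\sigma:V\to E$ of $\Phi$ on an open neighbourhood $V$ of the diagonal of $Y\times Y$, satisfying $\pi\sigma(y,y')=y$ and $s\sigma(y,y')=y'$, and sending the diagonal to the zero section. Consequently, any two holomorphic maps $h,h':W\to Y$ with $(h(w),h'(w))\in V$ for all $w$ satisfy $h'=s\circ\tau$ for the holomorphic section $\tau(w)=\sigma(h(w),h'(w))$ of $h^*E$, which lies close to the zero section.

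The one-step approximation is then as follows. Take $f\in\overline{\rho_\Omega^X(\cO(X,Y))}$ and $f'\in\cO(\Omega,Y)$ close to $f$ on a compact $K\subset\Omega$. Using the Runge hypothesis, we may assume $K$ is $\cO(X)$-convex. Choose $g\in\cO(X,Y)$ with $g|_\Omega$ approximating $f$ well on a neighbourhood of $K$; then $(g(x),f'(x))\in V$ there, and $\tau(x)=\sigma(g(x),f'(x))$ is a holomorphic section of $g^*E$ on that neighbourhood. Since $g^*E$ is a holomorphic vector bundle on the Stein manifold $X$, Proposition \ref{p:vector-bundles}(b) exhibits it as a direct summand of a trivial bundle, so the Oka--Weil theorem together with the $\cO(X)$-convexity of $K$ produces a global holomorphic section $\tilde\tau$ of $g^*E$ over $X$ that approximates $\tau$ uniformly on $K$. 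Then $s\circ\tilde\tau\in\cO(X,Y)$ approximates $f'$ uniformly on $K$ to any prescribed precision.

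To finish, pick a continuous path $\gamma:[0,1]\to\cO(\Omega,Y)$ from $f$ to $f'$. Given a compact $K\subset\Omega$, arranged to be $\cO(X)$-convex, and a precision $\epsilon>0$, the set $L=\{\gamma(t)(x):t\in[0,1],\,x\in K\}$ is compact in $Y$. Choose $\delta>0$ so that any two points of $L$ within distance $\delta$ lie in $V$ and have $\sigma$-image as close to the zero section as needed, partition $[0,1]$ as $0=t_0<t_1<\cdots<t_n=1$ so that $\gamma(t_i)$ and $\gamma(t_{i+1})$ differ by less than $\delta$ on $K$, and iterate the one-step construction. Starting from an approximant of $\gamma(t_0)=f$ on $K$, the $i$-th step uses the current approximant as the base $g$ and $\gamma(t_{i+1})$ as the new target, yielding an approximant of $\gamma(t_{i+1})$ on $K$. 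After $n$ steps we obtain $g\in\cO(X,Y)$ approximating $f'=\gamma(t_n)$ on $K$ to within $\epsilon$, so $f'\in\overline{\rho_\Omega^X(\cO(X,Y))}$.

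The main obstacle I expect is the quantitative bookkeeping in this iteration: at each stage the base is a perturbation of the true $\gamma(t_i)$, so errors accumulate, and one must keep every pair $(g_i(x),\gamma(t_{i+1})(x))$ in $V$ while also making the Oka--Weil tolerance at each stage small enough that the final deviation from $f'$ on $K$ stays below $\epsilon$. This reduces to choosing $\delta$ and the approximation tolerances small relative to $n$ and to the modulus of continuity of $\sigma$ on a suitable compact subset of $V$; everything else---the submersivity of $\Phi$, the existence of $\sigma$, and the Oka--Weil step---is routine with the tools already developed in the notes.
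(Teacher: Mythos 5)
Your overall architecture matches the paper's: reduce the statement to saturation under the path-component relation, establish a one-step approximation lemma using the spray to linearise, and iterate over a partition of $[0,1]$, with the usual quantitative bookkeeping at the end. The gap is in the very first technical step, the construction of the local right inverse $\sigma$.

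You claim that since $\Phi:E\to Y\times Y$, $v\mapsto(\pi(v),s(v))$, is a submersion along the zero section, ``the holomorphic implicit function theorem then produces a holomorphic section $\sigma:V\to E$ of $\Phi$ on an open neighbourhood $V$ of the diagonal.'' The implicit function theorem gives such sections only \emph{locally}, near each point $0_y$. To patch them into one holomorphic section on a neighbourhood of the diagonal, sending the diagonal to the zero section, you need a global holomorphic choice of complement to $\Ker D\Phi$ along the zero section, i.e.\ a holomorphic splitting of
\[ 0 \longrightarrow \Ker(Ds) \longrightarrow E \stackrel{Ds}{\longrightarrow} TY \longrightarrow 0 \]
over (a piece of) $Y$. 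But $Y$ is not assumed Stein, so this sequence need not split, and no such $\sigma$ need exist---not even over a small neighbourhood of a compact piece of the diagonal, since a compact subset of a non-Stein manifold need not have Stein neighbourhoods. Your one-step approximation in fact only uses $\sigma$ on pairs $(g(x),f'(x))$ for $x$ near $K$, but $g(K)$ can be any compact subset of $Y$, and it changes at each step of the iteration, so restricting to a compact does not rescue the argument.

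The paper circumvents this precisely by moving the whole construction into $Z=X\times Y$ and working section by section. The graph $h(S)$ of a section over a Stein domain $S\subset X$ is Stein, so Siu's Stein neighbourhood theorem gives a Stein neighbourhood $W$ of $h(S)$ in $S\times Y$, and over $W$ one can split $q^*E = \Ker D\sigma \oplus F$ by Proposition~\ref{p:vector-bundles}(a). The price is that the complement $F$ depends on the section $h$, so as the approximating section $g_j$ changes in each step of the induction the splitting must be updated; this is why the paper's iteration also invokes Oka--Weil for splitting morphisms, a step your proposal has no analogue of because it (incorrectly) assumes a single spray-independent $\sigma$ can be fixed once and for all. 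So the gap is not cosmetic: the additional machinery in the paper's proof---Siu's theorem and the approximation of splittings---is exactly what is needed to make your $\sigma$ exist in the only form in which it can.
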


Loosely speaking, in the setting of the theorem, approximability is deformation-invariant.

\begin{corollary}  \label{t:ell-implies-Oka}
An elliptic manifold is Oka.
\end{corollary}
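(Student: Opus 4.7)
The plan is to verify CAP for an elliptic manifold $Y$, since Oka is defined as the conjunction of the (equivalent) properties CAP, CIP, AP, IP, BOPAI. So let $K$ be a convex compact subset of $\C^m$ and let $f\colon K\to Y$ be holomorphic. By definition this means $f$ extends to a holomorphic map $\tilde f\colon\Omega\to Y$ on some open neighbourhood $\Omega$ of $K$, which we may shrink to assume is convex (so in particular Stein and Runge in $\C^m$, since convex compacts in $\C^m$ are polynomially convex).

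Next I would invoke Theorem \ref{t:ell-implies-Runge} with $X=\C^m$ and this $\Omega$. It tells us that the closure of the image of the restriction map $\rho_\Omega^{\C^m}\colon\cO(\C^m,Y)\to\cO(\Omega,Y)$ is a union of path components of $\cO(\Omega,Y)$. Hence it suffices to produce a path in $\cO(\Omega,Y)$ connecting $\tilde f$ to some map that is known to extend to a holomorphic map $\C^m\to Y$.

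The natural candidate is a constant map, which trivially extends. To build the path, pick any $z_0\in K$ and set
\[ \tilde f_t(z)=\tilde f\bigl((1-t)z+tz_0\bigr),\qquad t\in[0,1],\ z\in\Omega. \]
Convexity of $\Omega$ ensures $(1-t)z+tz_0\in\Omega$ for every $t$ and $z$, so $\tilde f_t\in\cO(\Omega,Y)$ for each $t$, and $t\mapsto\tilde f_t$ is continuous in the compact-open topology. At $t=0$ we recover $\tilde f$, and at $t=1$ we obtain the constant map with value $\tilde f(z_0)$, which is the restriction of a constant holomorphic map $\C^m\to Y$. Consequently $\tilde f$ lies in the same path component of $\cO(\Omega,Y)$ as an element of the image of $\rho_\Omega^{\C^m}$, so by Theorem \ref{t:ell-implies-Runge} it can be approximated, uniformly on compact subsets of $\Omega$ and in particular on $K$, by holomorphic maps $\C^m\to Y$. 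This is exactly CAP.

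There is no real obstacle here once Theorem \ref{t:ell-implies-Runge} is in hand; the only point requiring a moment's care is to connect $\tilde f$ to a constant through a path of holomorphic (not merely continuous) maps $\Omega\to Y$, and convexity of $\Omega$ makes the straight-line homotopy do the job.
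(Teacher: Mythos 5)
Your proof is correct and follows essentially the same route as the paper: shrink to a convex Runge neighbourhood, use the straight-line contraction to connect $\tilde f$ to a constant map through holomorphic maps, and then invoke Theorem~\ref{t:ell-implies-Runge}. The only cosmetic difference is that the paper normalises $0\in K$ and writes $f_t(z)=f(tz)$, whereas you contract toward an arbitrary $z_0\in K$; these are the same homotopy up to an affine change of parameter.
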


\begin{proof}
Suppose $Y$ is elliptic.  Let $K$ be a convex compact subset of $\C^m$, and $f:U\to Y$ be a holomorphic map defined on a neighbourhood $U$ of $K$.  We may assume that $U$ is convex, so $U$ is a Runge domain in $\C^m$, and that $0\in K$.  Considering the path $t\mapsto f_t:U\to Y$, where $f_t(z)=f(tz)$, we see that $f=f_1$ lies in the same path component of $\cO(U,Y)$ as the constant map $f_0$.  Thus, by Theorem \ref{t:ell-implies-Runge}, $f$ can be uniformly approximated on $K$ by holomorphic maps $\C^m\to Y$.
\end{proof}

More generally, Theorem \ref{t:ell-implies-Runge} shows that $\rho_\Omega^X\cO(X,Y)$ is dense in $\cO(\Omega,Y)$ if $\cO(\Omega,Y)$ is path connected.  If either $\Omega$ or $Y$ is \textit{holomorphically contractible}, meaning that the identity map can be deformed through holomorphic maps to a constant map, then $\cO(\Omega,Y)$ is path connected.

\begin{proof}[Proof of Theorem \ref{t:ell-implies-Runge}]
Let $\pi:E\to Y$ be a vector bundle with a dominating spray $s:E\to Y$.  Let $X$ and $\Omega$ be as above.  Let $Z=X\times Y$ and let $p:Z\to X$ and $q:Z\to Y$ be the projections.  Maps $X\to Y$ correspond to sections of $p$.  We pull $E$ back to a bundle $q^*\pi:q^*E\to Z$, and obtain a map $\sigma:q^*E\to Z$, $((x,y),v)\mapsto (x,s(v))$, where $v\in E_y$, called a \textit{fibre-dominating spray}.  The word \textit{fibre} refers to the fibres of $p$.  The map $\sigma$ takes the fibre of $q^*E$ over $(x,y)\in Z$ nondegenerately into the fibre $p^{-1}(x)$ in $Z$.

The domination property of $s$ means that $Ds:\Ker D\pi\vert Y\to TY$ is an epimorphism of bundles over $Y$, that is, fibrewise surjective.  Here, $\Ker D\pi\vert Y$ is the vertical subbundle of the tangent bundle of $E$, restricted to its zero section, which we identify with $Y$, so $\Ker D\pi\vert Y$ is naturally identified with $E$ itself.  Therefore, $D\sigma:\Ker D q^*\pi\vert Z \to \Ker Dp$ is an epimorphism of bundles over $Z$.

Let $f\in\cO(S,Y)$, where $S$ is a Stein domain in $X$, and let $h:x\mapsto (x,f(x))$ be the corresponding section of $p$ over $S$.  By Siu's Stein neighbourhood theorem \cite[Theorem IX.2.13]{Demailly2012}, the Stein submanifold $h(S)$ of $S\times Y$ has a Stein neighbourhood $W$ in $S\times Y$.  Over $W$, viewed as a Stein open subset of $Z$, there is a holomorphic subbundle $F$ of $\Ker D q^*\pi\vert Z$ such that $\Ker D q^*\pi\vert Z=\Ker D\sigma\oplus F$ and $D\sigma:F\to\Ker Dp$ is an isomorphism (Proposition \ref{p:vector-bundles}(a)).  Since $\Ker D q^*\pi\vert Z$ is naturally identified with $q^*E$, we may view $F$ as a subbundle of $q^*E$.  By the inverse function theorem, $\sigma$ maps a neighbourhood of the zero section in $F\vert h(S)$ biholomorphically onto a neighbourhood of $h(S)$ in $Z$.

Now let $I \to\cO(\Omega,Y)$, $t\mapsto f_t$, be a continuous path such that $f_0\in\overline{\rho_\Omega^X\cO(X,Y)}$.  Let $h_t:x\mapsto (x,f_t(x))$ be the corresponding sections of $p$.  Let $K\subset\Omega$ be compact.  We need to show that $f_1$ can be uniformly approximated on $K$ by holomorphic maps $X\to Y$.

A Runge domain in a Stein manifold can be exhausted by relatively compact Runge subdomains (for example by analytic polyhedra).  Choose a Runge domain $U_0$ with $K\subset U_0\Subset\Omega$.  There is a partition $0=t_0<t_1<\cdots<t_k=1$ of $I$ such that for each $j=0,\ldots,k-1$ and each $t\in[t_j, t_{j+1}]$, $h_t(U_0)$ lies in a neighbourhood of $h_{t_j}(U_0)$ that is a biholomorphic image by $\sigma$ of a neighbourhood of the zero section in $F_j\vert h_{t_j}(U_0)$.  Here, $F_j$ is the bundle $F$ obtained as above with $S=\Omega$ and $h=h_{t_j}$.  Hence $h_t\vert U_0$ lifts to a holomorphic section $\xi$ of $F_j\vert h_{t_j}(U_0)$ with $\sigma\circ\xi\circ h_{t_j}=h_t$ on $U_0$.

Choose Runge domains $U_1,\ldots,U_k$ and $V_0,\ldots,V_{k-1}$ such that
\[ K\subset U_k\Subset V_{k-1}\Subset U_{k-1}\Subset\cdots\Subset U_1\Subset V_0\Subset U_0\Subset \Omega. \]
By assumption, $h_0$ can be uniformly approximated on $U_0$ by a holomorphic section $g_0$ of $p$ defined on all of $X$.  If the approximation is close enough, then $F_0$ is defined on $g_0(U_0)$ and $h_{t_1}\vert U_0$ lifts to a section $\xi$ of $F_0\vert g_0(U_0)$ with $\sigma\circ\xi\circ g_0=h_{t_1}$ on $U_0$.  By Oka-Weil for splitting morphisms (see the remarks following the proof of Proposition \ref{p:vector-bundles}), there is a bundle $F$ obtained as above with $S=X$ and $h=g_0$ that approximates $F_0$ closely enough on $g_0(V_0)$ that $h_{t_1}\vert V_0$ lifts to a section $\xi$ of $F\vert g_0(V_0)$ with $\sigma\circ\xi\circ g_0=h_{t_1}$ on $V_0$.  By Oka-Weil for sections of $F\vert g_0(X)$, $\xi$ can be uniformly approximated on $g_0(U_1)$ by global sections of $F\vert g_0(X)$.  Thus, $h_{t_1}$ can be uniformly approximated on $U_1$ by global sections of $p$.

Continuing in this way, we see that $h_1$ can be uniformly approximated on $U_k$ by global sections of $p$.  Hence, $f_1$ can be uniformly approximated on $K$ by holomorphic maps $X\to Y$.
\end{proof}

Theorem \ref{t:ell-implies-Runge} also holds for weakly subelliptic manifolds, but a more involved proof, using Gromov's technique of \textit{composed sprays}, is required.  Thus, weakly subelliptic manifolds are Oka \cite[Corollary 5.5.12]{Forstneric2011}.


\section{Forstneri\v c's theorem}
\label{s:forstneric}

\noindent
Forstneri\v c's proof that CAP implies not only BOPAI but also a stronger property called POPAI (see Section \ref{s:oka-manifolds}) appeared in three papers in 2005--2009 (\cite{Forstneric2005}, \cite{Forstneric2006}, \cite{Forstneric2009}).  The proof is presented in detail in \cite[Chapter 5]{Forstneric2011}.  There is a brief summary of the proof in the survey \cite{ForstnericLarusson2011} and a more detailed overview in the survey \cite{Forstneric2013}.  Here we will highlight two key ingredients in the proof, \textit{exhaustion by convex bumps} and \textit{gluing of thick sections}.  These key ingredients are already present in the proof that CAP implies AP.

Let $Y$ be a manifold satisfying CAP.  Let $X$ be a Stein manifold and $f:X\to Y$ be a continuous map which is holomorphic on a neighbourhood $U$ of a holomorphically convex compact subset $K$ of $X$.  We want to show that $f$ can be uniformly approximated on $K$ by holomorphic maps $X\to Y$.

An approximant is constructed by modifying $f$ step by step.  At each step, we make $f$ holomorphic on a slightly larger set, obtained by adding a \lq\lq bump\rq\rq\ to the set where $f$ was already holomorphic, keeping $f$ almost unchanged on the latter set.  The bumps gradually fill out the sublevel sets of a smooth strictly plurisubharmonic exhaustion $\psi:X\to\R$ with $\psi<0$ on $K$ and $\psi>0$ on $X\setminus U$.  Away from the critical points of $\psi$, the bumps have a very simple geometric shape, as we will explain in a moment.  Near a critical point, the geometry is more complicated and the procedure is different.  This is where topological obstructions could arise, but they are ruled out by $f$ being defined on all of $X$ as a continuous map.  We will not discuss the critical case (see \cite[\S 3.5]{Forstneric2013}).

At the start of each step, away from the critical points of $\psi$, $f$ is holomorphic on the closure $A$ of a relatively compact strictly pseudoconvex domain in $X$.  Near each of its boundary points, $A$ is defined by the inequality $\rho\leq 0$, where $\rho$ is a smooth strictly plurisubharmonic function.  Then $A$ with the next bump $B$ added to it is defined by the inequality $\rho\leq\epsilon\chi$, where $\chi$ is a smooth bump function with small support, and $\epsilon>0$ is small enough that $\rho-\epsilon\chi$ is strictly plurisubharmonic.  It may be arranged that $\overline{A\setminus B}\cap \overline{B\setminus A}=\varnothing$.  (Think of $A$ as a large rectangle and $B$ as a small rectangle such that $A\cap B$ and $A\cup B$ are also rectangles.)

The following lemma shows that there are holomorphic coordinates on a neighbourhood $U$ of $B$ in which $B$, $A\cap B$, and $(A\cup B)\cap U$ are strictly convex.  This is the reason why approximation on convex sets (CAP) is enough to give approximation on arbitrary holomorphically convex sets (AP).

\begin{lemma}[Narasimhan's lemma]  \label{l:Narasimhan}
Let $u$ be a smooth strictly plurisubharmonic function on a neighbourhood of the origin $0$ in $\C^n$.  If $0$ is not a critical point of $u$, then there are holomorphic coordinates on a neighbourhood of $0$ in which $u$ is strictly convex.
\end{lemma}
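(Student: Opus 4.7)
The plan is to construct explicit holomorphic coordinates $w$ near $0$ in which the first-order information and the holomorphic quadratic part of $u$ collapse into the single real-linear term $\Re w_1$, leaving only the Levi form of $u$ to control the real Hessian.

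First I would Taylor-expand $u$ at $0$ through second order and, using the reality of $u$, group the pure holomorphic and antiholomorphic terms via complex conjugation to obtain
\begin{equation*}
u(z) - u(0) = \Re P(z) + H(z) + o(|z|^2),
\end{equation*}
where
\[ P(z) = 2\sum_j \frac{\partial u}{\partial z_j}(0)\, z_j + \sum_{j,k} \frac{\partial^2 u}{\partial z_j \partial z_k}(0)\, z_j z_k \]
is a holomorphic polynomial of degree $\leq 2$ with $P(0) = 0$, and $H(z) = \sum_{j,k} \frac{\partial^2 u}{\partial z_j \partial \bar z_k}(0)\, z_j \bar z_k$ is the Hermitian Levi form, which is positive definite by strict plurisubharmonicity.

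Because $u$ is real, the condition $du(0) \neq 0$ is equivalent to $\partial u(0) \neq 0$, so some $\frac{\partial u}{\partial z_j}(0)$ is non-zero; after a linear permutation of coordinates I may assume $j = 1$. I would then define
\[ \Phi(z) = \bigl(P(z),\, z_2,\, \ldots,\, z_n\bigr). \]
Its Jacobian at $0$ is triangular with $(1,1)$-entry $2\,\frac{\partial u}{\partial z_1}(0) \neq 0$, so $\Phi$ is a local biholomorphism at $0$ providing new holomorphic coordinates $w = \Phi(z)$ in which $P(z) = w_1$ identically. Since $z = d\Phi(0)^{-1} w + O(|w|^2)$, the expansion transports to
\begin{equation*}
u(z) - u(0) = \Re w_1 + \tilde H(w) + o(|w|^2),
\end{equation*}
where $\tilde H$ is the pullback of $H$ by the invertible linear map $d\Phi(0)^{-1}$, and is therefore still a positive definite Hermitian form in $w$.

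To finish, writing $w_j = x_j + i y_j$, the real Hessian of $u$ at $0$ in the $w$-coordinates is the real Hessian of $\tilde H$ alone, because $\Re w_1$ contributes nothing; and a positive definite Hermitian form is positive definite as a real quadratic form in the $2n$ variables $(x_j, y_j)$. Thus $u$ has positive definite real Hessian at $0$ and is strictly convex on a neighbourhood of $0$ in the $w$-coordinates. I do not anticipate any genuine obstacle here: the coordinate change is entirely explicit, and the only bookkeeping step is that a linear change of variables preserves positive definiteness of a Hermitian form, which is immediate.
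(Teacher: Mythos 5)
Your proof is correct and follows the same strategy as the paper's: Taylor-expand $u$, isolate the holomorphic quadratic polynomial $P$, and change holomorphic coordinates so that $\Re P$ becomes linear, leaving only the positive definite Levi form at second order. The paper only carries this out for $n=1$ (with $u_z(0)$ normalised to $1$) and remarks that the general case is no harder; you have simply written out the $n$-variable version explicitly.
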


\begin{proof}
Take $n=1$; the proof of the general case is no more difficult.  The second-order Taylor expansion of $u$ at $0$ is
\[ \begin{aligned}
u(w) &= u(0) + u_z(0)w +\tfrac 1 2 u_{zz}(0)w^2 + u_{\bar z}(0)\bar w +\tfrac 1 2 u_{\bar z\bar z}(0)\bar w^2 + u_{z\bar z}(0)w\bar w+o(\lvert w\rvert^2) \\ &=  u(0) + 2\Re\big[u_z(0)w +\tfrac 1 2 u_{zz}(0)w^2\big] + u_{z\bar z}(0)w\bar w + o(\lvert w\rvert^2).
\end{aligned} \]
Say $u_z(0)=1$.  Take a new coordinate $w'=w+\tfrac 1 2u_{zz}(0)w^2$ near $0$.  In this new coordinate,
\[ u(w') = u(0) + 2\Re w' + u_{z\bar z}(0)w'\bar w' + o(\lvert w'\rvert^2) \]
is strictly convex near $0$.
\end{proof}

It remains to explain how holomorphicity of a map is extended from $A$ to the convex bump $B$.  We need to show that a holomorphic map $f:A\to Y$ can be uniformly approximated on $A$ by holomorphic maps $A\cup B\to Y$.  This is done in three steps.  As in the proof of Theorem \ref{t:ell-implies-Runge}, it is now convenient to view maps $X\to Y$ as sections of the projection $p:Z=X\times Y\to X$.

\noindent
\textit{Step 1:  Thickening.}  Let $V\Subset V_0$ be Stein neighbourhoods of $A$ such that $f$ is defined on $V_0$.  The Stein submanifold $f(V_0)$ of $V_0\times Y$ has a Stein neighbourhood $\Omega$ in $Z$.  By Proposition \ref{p:vector-bundles}, there are holomorphic vector fields $v_1,\ldots,v_k$ on $\Omega$ that are tangent to the fibres of $p$ and span the vertical subbundle $\Ker Dp$ of the tangent bundle $TZ$ at every point of $\Omega$.  Let $\phi_j^t$ be the flow of $v_j$.  Define a holomorphic map $F:V\times W\to Z$, where $W$ is a small enough neighbourhood of the origin in $\C^k$, by the formula
\[ F(x,w_1,\dots,w_k)=\phi_k^{w_k}\circ\cdots\circ\phi_1^{w_1}\circ f(x). \]
For every $w\in W$, $F(\cdot,w)$ is a section of $p$.  We call $F$ a \textit{thick section} of $p$ over $V$ or a \textit{holomorphic spray of sections} of $p$ over $V$ with \textit{core section} $F(\cdot, 0)=f$.  Since 
\[ \partial_{w_j} F(x,w)\big\vert_{w=0} = v_j(f(x)), \] 
the derivative 
\[ \partial_w F(x,w)\big\vert_{w=0}:\C^k\to(\Ker Dp)_{f(x)} \]
is surjective for all $x\in V$, so we say that $F$ is \textit{dominating over} $V$ (or simply \textit{dominating}).  

\noindent
\textit{Step 2:  Approximation.}  Let $D$ be a closed ball in $W$ centred at $0$.  Write $C=A\cap B$.  Since $Y$ satisfies CAP, we can uniformly approximate $F$ on $C\times D$ by a holomorphic thick section $G:B\times D\to Z$ of $p$.  This is the one place in Forstneri\v c's proof where CAP is invoked.  (In fact, the approximation needs to be done on a neighbourhood of $C\times D$, but we ignore such details here and in the following.)

\noindent
\textit{Step 3:  Gluing.}  Next we \lq\lq glue\rq\rq\ $F$ and $G$ together into a holomorphic thick section over $A\cup B$, whose core section will be our desired approximant.

The kernel of the epimorphism
\[ \partial_w F(\cdot,w)\big\vert_{w=0}:C\times\C^k\to\Ker Dp\vert f(C) \]
has a complement $E$ in the trivial bundle $C\times\C^k$ over $C$, such that
\[ \partial_w F(\cdot,w)\big\vert_{w=0}:E\to\Ker Dp\vert f(C) \]
is an isomorphism.  Hence, $F$ maps a neighbourhood of the zero section of $E$ biholomorphically onto a neighbourhood of $f(C)$ in $Z$.  Therefore, if the approximation in Step 2 is good enough, and after shrinking $W$, we obtain a holomorphic map $\gamma:C\times W\to C\times\C^k$ of the form $\gamma(x,w)=(x,\gamma_2(x,w))$, close to the inclusion, such that $F\circ\gamma=G$.

The closer $G$ is to $F$ on $C\times D$, the closer $\gamma$ will be to the inclusion.  If $\gamma$ is close enough to the inclusion and we shrink $W$ again, Forstneri\v c's \textit{splitting lemma} provides holomorphic maps $\alpha:A\times W\to A\times\C^k$ and $\beta:B\times W\to B\times \C^k$ of the same form as $\gamma$, close to the respective inclusions, such that $\gamma\circ\beta=\alpha$ on $C\times W$.  Then $F\circ\alpha=G\circ\beta$ on $C\times W$, so $F\circ\alpha$ and $G\circ\beta$ define a thick section over $A\cup B$, whose core section is the desired approximant.

The splitting lemma first appeared in \cite[\S 4]{Forstneric2003}.  It may be viewed as a nonlinear analogue of Cartan's classical \textit{Heftungslemma}.  It is a powerful tool.  In addition to the application described here, it has been used to construct proper holomorphic maps from complex curves to certain complex spaces \cite{DrinovecForstneric2007}, to extend the Poletsky theory of disc functionals from manifolds to singular spaces \cite{DrinovecForstneric2012}, and to expose boundary points of strictly pseudoconvex domains \cite{DiederichEtAl2013}.

We conclude this section with a rough sketch of the proof of the splitting lemma, following \cite[\S 5.8]{Forstneric2011}.  Let $\mathscr C$ be the Banach space of holomorphic maps $C\times W\to\C^k$ with finite supremum norm.  Similarly define the Banach spaces $\mathscr A$ and $\mathscr B$ for $A$ and $B$, respectively.

First we seek bounded linear operators $L:\mathscr C\to\mathscr A$ and $M:\mathscr C\to\mathscr B$ such that $c=L c-M c$ for all $c\in\mathscr C$.  Since $\overline{A\setminus B}\cap \overline{B\setminus A}=\varnothing$, there is a smooth function $\chi:X\to[0,1]$ such that $\chi=0$ near $\overline{A\setminus B}$ and $\chi=1$ near $\overline{B\setminus A}$.  For every $c\in\mathscr C$, $\chi(x) c(x,w)$ extends to a continuous map on $A\times W$ that vanishes on $\overline{A\setminus B}\times W$, and $(\chi(x)-1)c(x,w)$ extends to a continuous map on $B\times W$ that vanishes on $\overline{B\setminus A}\times W$.  Also, $\bar\partial(\chi c)=\bar\partial((\chi-1)c)=c\bar\partial\chi$ is a smooth $(0,1)$-form on $(A\cup B)\times W$, supported in $C\times W$, and depending holomorphically on $w\in W$.  Using a bounded linear integral operator $T$ to solve the $\bar\partial$-equation on $A\cup B$, we define
\[ \begin{aligned}
Lc(x,w) &= \chi(x)c(x,w)-T(c(\cdot,w)\bar\partial\chi)(x), \\
Mc(x,w) &= (\chi(x)-1)c(x,w)-T(c(\cdot,w)\bar\partial\chi)(x).
\end{aligned} \]
It is easily verified that $L$ and $M$ have the desired properties.

Now let $\mathscr E$ be the Banach space of holomorphic maps $\eta:C\times W\to\C^k$ such that both $\eta$ and $\partial_w\eta$ have finite supremum norm.  For $\eta\in\mathscr E$ near $\eta_0:(x,w)\mapsto w$ and $c\in\mathscr C$ near $0$, let
\[ \Phi(\eta,c)(x,w)=w+Lc(x,w)-\eta(x,w+Mc(x,w)) \]
for $x\in C$ and $w\in W$.  This defines a smooth map $\Phi$ from a neighbourhood of $(\eta_0,0)$ in the Banach space $\mathscr E\times\mathscr C$ to the Banach space $\mathscr C$.  For all $c\in\mathscr C$,
\[ \Phi(\eta_0,c)=Lc-Mc=c, \]
so $\partial_c\Phi(\eta_0,0)=\id_{\mathscr C}$.  By the implicit function theorem for Banach spaces, there is a smooth map $h$ to $\mathscr C$ from a neighbourhood of $\eta_0$ in $\mathscr E$, such that $h(\eta_0)=0$ and $\Phi(\eta,h(\eta))=0$ for all $\eta$ in this neighbourhood.  Finally, let
\[ \begin{aligned}
\alpha(x,w)&=(x, w+Lh(\gamma_2)(x,w)), \\
\beta(x,w)&=(x, w+Mh(\gamma_2)(x,w)).
\end{aligned} \]
Then $\Phi(\gamma_2,h(\gamma_2))=0$ means that $\gamma\circ\beta=\alpha$.


\section{Oka manifolds}
\label{s:oka-manifolds}

\noindent
Recall that a complex manifold is defined to be Oka if it satisfies the equivalent properties CAP, CIP, AP, IP, and BOPAI.  In Section \ref{s:linearisation}, we proved that elliptic manifolds are Oka and we saw some examples of elliptic manifolds.

\subsection{Properties of Oka manifolds}  We begin with three exercises.

\begin{exercise}  \label{x:dominable}
Let $Y$ be an Oka manifold.  Prove the following.

(a)  $Y$ is \textit{strongly dominable}, meaning that for every $y\in Y$, there is a holomorphic map $f:\C^{\dim Y}\to Y$ such that $f(0)=y$ and $f$ is a local biholomorphism at $0$.  (If such a map exists for some $y\in Y$, then $Y$ is called \textit{dominable}.)

(b)  $Y$ is $\C$-\textit{connected}, meaning that any two points of $Y$ can be joined by a holomorphic image of $\C$.

(c)  The Kobayashi pseudodistance on $Y$ vanishes identically.\footnote{The Kobayashi pseudodistance on $Y$ is the largest pseudodistance $d$ on $Y$ such that $d(f(z),f(w))\leq\delta(z,w)$ for all holomorphic maps $f:\D\to X$, where $\delta$ denotes the Poincar\'e distance on $\D$.  It is a nontrivial but rather easy fact that there is a \textit{largest} such pseudodistance.}

(d)  If a plurisubharmonic function on $Y$ is bounded above, then it is constant.  (This holds for $Y=\C^n$ because if $u$ is plurisubharmonic on $\C^n$, then the function $r\mapsto\sup\limits_{\lvert z\rvert=e^r} u(z)$ is convex and increasing.)
\end{exercise}

\begin{exercise}  \label{x:Riemann-Oka}
Show that a Riemann surface is Oka if and only if it is not covered by~$\D$.
\end{exercise}

\begin{exercise}  \label{x:Rosay-Rudin}
In Section \ref{s:linearisation}, we mentioned the result of Rosay and Rudin that if $n\geq 2$, then there is a closed discrete set $D\subset\C^n$ which is unavoidable by nondegenerate holomorphic maps $\C^n\to\C^n$.  Thus $\C^n\setminus D$ is not dominable and hence not Oka.  Show that $\C^n\setminus D$ is $\C$-connected.
\end{exercise}

Recall that an Oka manifold $Y$ satisfies BOP, meaning that every continuous map $f$ from a Stein manifold $X$ to $Y$ can be deformed to a holomorphic map.  In other words, the inclusion $\cO(X,Y) \hookrightarrow \cC(X,Y)$ is a surjection on path components.  (These spaces are always endowed with the compact-open topology.  Note that $\cO(X,Y)$ is closed in $\cC(X,Y)$.)  

More is true.  If $f$ belongs to a family of maps depending continuously on a parameter in a \lq\lq nice\rq\rq\ parameter space $P$, then the deformation can be made to depend continuously on the parameter.  Furthermore, if the maps parametrised by a closed subspace $Q$ of $P$ are holomorphic to begin with, then they can be left fixed during the deformation.  More precisely, $Y$ satisfies the \textit{parametric Oka property} (POP), defined as follows.

Let $X$ be a Stein manifold and $Q\subset P$ be compact subsets of $\R^n$.  For every continuous map $f:X\times P\to Y$ such that $f(\cdot,q):X\to Y$ is holomorphic for every $q\in Q$, there is a continuous map $F:X\times P\times I\to Y$ such that:
\begin{itemize}
\item  $F(\cdot,\cdot,0)=f$,
\item  $F(\cdot,q,t)=f(\cdot,q)$ for every $q\in Q$ and $t\in I$,
\item  $F(\cdot,p,1)$ is holomorphic for every $p\in P$.
\end{itemize}
As mentioned earlier, Forstneri\v c has shown that Oka manifolds satisfy a stronger property, a parametric version of BOPAI called the \textit{parametric Oka property with approximation and interpolation} (POPAI).

\begin{exercise}  \label{x:weak-equivalence}
(a)  Use POP to show that if $X$ is Stein and $Y$ is Oka, and holomorphic maps $f, g:X\to Y$ are homotopic through continuous maps, then $f$ and $g$ are homotopic through holomorphic maps.

(b)  Apply POP with the parameter space pairs $\varnothing\hookrightarrow\ast$, $\{0,1\}\hookrightarrow [0,1]$, $\ast\hookrightarrow S^k$, and $S^k\hookrightarrow B^{k+1}$ for $k\geq 1$, where $B^{k+1}$ denotes the closed $(k+1)$-dimensional ball and $S^k=\partial B^{k+1}$ is the $k$-dimensional sphere, to show that if $X$ is Stein and $Y$ is Oka, then the inclusion $\cO(X,Y)\hookrightarrow\cC(X,Y)$ is a weak homotopy equivalence with respect to the compact-open topology, that is, it induces a bijection of path components and isomorphisms of all homotopy groups.
\end{exercise}

It is natural to ask whether $\cO(X,Y)\hookrightarrow\cC(X,Y)$ is a homotopy equivalence.  The usual way to show that a weak equivalence is a homotopy equivalence is to apply Whitehead's theorem \cite[\S 10.3]{May1999}, which requires the source and the target to be CW complexes.  In our case, $\cO(X,Y)$ and $\cC(X,Y)$ carry CW structures only in trivial cases: they are metrisable, but a metrisable CW complex is locally compact.  Using more advanced topological methods, it was recently shown that if $X$ is affine algebraic, then $\cO(X,Y)\hookrightarrow\cC(X,Y)$ is a homotopy equivalence.  Moreover, $\cO(X,Y)$ is a deformation retract of $\cC(X,Y)$ \cite{Larusson2013}.  We will discuss this in more detail in Section \ref{s:space}.

\subsection{New Oka manifolds from old}  In Proposition \ref{p:new-elliptic-from-old}, we noted the limited known ways to produce new elliptic manifolds from old.  There are many more ways to construct new Oka manifolds from old.  We start with the following result.

\begin{proposition}  \label{p:down}
Let $p:Y\to Z$ be a holomorphic covering map.  Then $Y$ is Oka if and only if $Z$ is Oka.
\end{proposition}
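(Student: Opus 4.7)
The plan is to verify CAP for both manifolds, exploiting that $p$ is a local biholomorphism and that convex sets (as well as $\C^m$ itself) are simply connected, so lifts through $p$ exist and are unique up to sheet.

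For the direction $Y$ Oka $\Rightarrow$ $Z$ Oka, suppose $f:U\to Z$ is holomorphic on a convex open neighbourhood $U$ of a convex compact set $K\subset\C^m$. Since $U$ is simply connected and $p$ is a covering map, $f$ lifts to a holomorphic map $\tilde f:U\to Y$. Apply CAP for $Y$ to obtain holomorphic maps $\tilde g_n:\C^m\to Y$ approximating $\tilde f$ uniformly on $K$. Then $g_n=p\circ\tilde g_n:\C^m\to Z$ are holomorphic and, by uniform continuity of $p$ on a relatively compact neighbourhood of $\tilde f(K)$, approximate $f=p\circ\tilde f$ uniformly on $K$.

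For the converse, suppose $f:U\to Y$ is holomorphic on a convex neighbourhood $U$ of $K$. Apply CAP for $Z$ to $p\circ f$ to obtain holomorphic maps $g_n:\C^m\to Z$ approximating $p\circ f$ uniformly on $K$. Since $\C^m$ is simply connected, each $g_n$ admits a global holomorphic lift $\tilde g_n:\C^m\to Y$, and the lift is determined by its value at any one point. It remains to choose these lifts so that $\tilde g_n\to f$ uniformly on $K$.

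Fix $z_0\in K$ and a metric $d$ on $Y$ inducing its topology. By compactness of $f(K)$ and the local-homeomorphism property of $p$, there exist $\epsilon>0$ and, for each $y\in f(K)$, a neighbourhood $W_y$ containing the $d$-ball of radius $\epsilon$ around $y$ on which $p$ is a biholomorphism onto its image. For $n$ large enough that $g_n$ is within $\epsilon/3$ of $p\circ f$ on $K$, there is a unique $y_n$ in the $\epsilon$-ball around $f(z_0)$ with $p(y_n)=g_n(z_0)$; take $\tilde g_n$ to be the lift sending $z_0$ to $y_n$. A standard continuation-of-lifts argument along straight-line paths in $K$ (using that $p$ restricted to each $W_y$ has a holomorphic inverse close to $f$ in the $d$-sense, and that $K$ is connected) shows that $\tilde g_n$ agrees, near $f(K)$, with the map obtained by composing $g_n$ with local inverses of $p$, and hence $\tilde g_n\to f$ uniformly on $K$. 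This establishes CAP for $Y$. The main obstacle is this last step, reconciling the globally-defined monodromy lift with the locally-defined lift via inverses of $p$, and passing from pointwise to uniform approximation; everything else is essentially functorial.
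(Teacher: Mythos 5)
Your proof is correct, but it takes a genuinely different route from the paper's. You work with CAP, while the paper formulates the Oka property as CIP and lets covering space theory do all the work: a holomorphic map $S\to Y$ from a contractible subvariety pushes down to $Z$, extends to $\C^m$ by CIP, and then lifts back \emph{exactly} to $\C^m\to Y$ because $\C^m$ is simply connected and the lift already agrees with $f$ on $S$ (and symmetrically for the other direction). This is a three-line argument because extension and lifting are both exact operations; there is no approximation to control. Your CAP version has to deal with the fact that $g_n$ is only \emph{close} to $p\circ f$, so the chosen lifts $\tilde g_n$ are not a priori close to $f$ anywhere except where you pin them down; you then need the continuation-of-lifts argument along segments in the convex $K$, together with a uniform injectivity radius for $p$ over $f(K)$, to propagate closeness from $z_0$ to all of $K$. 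That argument does go through, though as written it is slightly loose: the $\epsilon$ in ``within $\epsilon/3$ of $p\circ f$'' measures distance in $Z$ while the $\epsilon$-ball around $f(z_0)$ lives in $Y$, so you should introduce a separate $\delta>0$ (obtained by openness of $p$ and compactness of $f(K)$) such that every $\delta$-ball in $Z$ around a point of $p(f(K))$ lies in $p$ of the corresponding $\epsilon$-ball in $Y$, and take $n$ large relative to $\delta$. With that cleaned up, the proof is sound. What the paper's CIP approach buys is exactly the avoidance of this bookkeeping; what your CAP approach buys is a direct illustration of how approximation survives lifting through a covering, which is a useful technique in its own right even if it is more work here.
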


\begin{proof}
Let us formulate the Oka property as CIP.  Let $S$ be a contractible subvariety of $\C^m$.  Suppose $Z$ is Oka and let $f:S\to Y$ be holomorphic.  Then $p\circ f:S\to Z$ extends to a holomorphic map $h:\C^m\to Z$.  Since $S\hookrightarrow\C^m$ is acyclic and $p$ is a covering map, we obtain a holomorphic lifting $\C^m\to Y$ in the square below, extending~$f$.
\[ \xymatrix{
S \ar[r]^f \ar@{^{(}->}[d] & Y \ar[d]^p \\ \C^m \ar@{-->}[ur] \ar[r]^h & Z
} \]

Now suppose $Y$ is Oka and let $f:S\to Z$ be holomorphic.  Since $S$ is contractible and $p$ is a covering map, $f$ lifts to a holomorphic map $g:S\to Y$.  Then $g$ extends to a holomorphic map $h:\C^m\to Y$, and $p\circ h$ extends $f$.
\end{proof}

The proposition provides examples of Oka manifolds that are not known to be elliptic.

\begin{example}  \label{e:Oka-not-elliptic}
(a)  A \textit{Hopf manifold} is a compact manifold with universal covering space $\C^n\setminus\{0\}$, $n\geq 2$.  By Example \ref{e:elliptic}(c), $\C^n\setminus\{0\}$ is elliptic and hence Oka.  Thus every Hopf manifold is Oka.

(b)  Let $\Gamma$ be a lattice in $\C^n$, $n\geq 2$, and $T=\C^n/\Gamma$ be the corresponding torus.  Let $F\subset T$ be finite.  The preimage $E$ of $F$ in $\C^n$ is the union of finitely many translates of~$\Gamma$.  It may be shown that $E$ is tame \cite[Proposition 4.1]{BuzzardLu2000}, so the universal covering $\C^n\setminus E$ of $T\setminus F$ is elliptic \cite[Proposition 5.5.14]{Forstneric2011}.  Hence, $T\setminus F$ is Oka.
\end{example}

\begin{exercise}  \label{x:product-union-retract}
(a)  Show that the product of two Oka manifolds is an Oka manifold.

(b)  Show that if a complex manifold $Y$ is exhausted by Oka domains $\Omega_1\subset\Omega_2\subset\Omega_3\subset\cdots$, then $Y$ is Oka.

This property was used to prove that minimal Enoki surfaces are Oka (see Section \ref{s:deformations}).  It also shows that a long $\C^n$ (a manifold that is an increasing union of domains that are biholomorphic to $\C^n$) is Oka.  A long $\C^2$ need not be Stein \cite{Wold2010}.

(c)  Show that a retract of an Oka manifold is Oka.  More explicitly, if $Z$ is a submanifold of an Oka manifold $Y$ and there is a holomorphic map $r:Y\to Z$ such that $r\circ i=\id_Z$, where $i:Z\hookrightarrow Y$ is the inclusion, then $Z$ is Oka.
\end{exercise}

The most powerful method for constructing new Oka manifolds from old is a generalisation of Proposition \ref{p:down}, in which covering maps are replaced by a much larger class of maps, called \textit{Oka maps}.  What should it mean for a holomorphic map to be Oka?  First, it means that the map satisfies POPAI suitably formulated for maps, so that $Y\to\ast$ satisfies POPAI as a map if and only if $Y$ does as a manifold.  As above, in the case of manifolds, let us formulate the simpler POP (without approximation and interpolation).

A holomorphic map $\pi:Y\to Z$ satisfies POP if whenever:
\begin{itemize}
\item $X$ is a Stein manifold, 
\item $Q\subset P$ are compact subsets of $\R^n$, 
\item $f:X\times P\to Z$ is continuous and $f(\cdot,p)$ is holomorphic for all $p\in P$,
\item $g_0:X\times P\to Y$ is a continuous lifting of $f$ by $\pi$ (so $\pi\circ g_0=f$) and $g_0(\cdot,q)$ is holomorphic for all $q\in Q$,
\end{itemize}
there is a continuous deformation $g_t:X\times P\to Y$ of $g_0$ such that for all $t\in I$,
\begin{itemize}
\item $\pi\circ g_t=f$, 
\item $g_t=g_0$ on $X\times Q$,
\item $g_1(\cdot,p)$ is holomorphic for all $p\in P$.
\end{itemize}
\[ \xymatrix{
X\times Q \ar[r] \ar@{^{(}->}[d] & Y \ar[d]^\pi \\ X\times P \ar[ur]^{g_{{}_t}} \ar[r]^>>>>f & Z
} \]

When $P$ is a singleton and $Q$ is empty, POP simply says that if $X$ is Stein and $f:X\to Z$ is holomorphic, then every continuous lifting $X\to Y$ of $f$ by $\pi$ can be deformed through such liftings to a holomorphic lifting.  In particular, if $Z$ is Stein, every continuous section of $\pi$ can be deformed to a holomorphic section.

The class of holomorphic maps satisfying POPAI is not closed under composition, as demonstrated by the simple example $\D\hookrightarrow\C\to\ast$.  Homotopy-theoretic considerations show that it is natural and gives a much better-behaved property to define a holomorphic map to be Oka if it satisfies POPAI and is a \textit{topological fibration}.\footnote{Oka maps were first defined in \cite[\S 16]{Larusson2004}.  See also \cite{Forstneric2010}.}   

By a topological fibration, we mean a Serre fibration or a Hurewicz fibration \cite[Chapter 7]{May1999}: in our case, the two notions are equivalent \cite[Theorem 5.1]{Arnold1972}.  A continuous map $Y\to Z$ is a Hurewicz fibration if it satisfies the \textit{covering homotopy property}, meaning that every commuting square of continuous maps
\[ \xymatrix{
A \ar[r] \ar[d]_{i_0} & Y \ar[d] \\ A\times I \ar[r] \ar@{-->}[ur] & Z
} \]
where $A$ is any topological space and $i_0(a)=(a,0)$, has a continuous lifting.  The notion of a Serre fibration restricts $A$ to be a cube, or a polyhedron, or a CW complex: these three choices all define the same property.  The most important example of a topological fibration is a fibre bundle, that is, a continuous map that is locally trivial over the base (at least if the base is paracompact).

\begin{exercise}  \label{x:oka-maps}
Prove the following.  Feel free to work with POP instead of POPAI, that is, to take an Oka map to be a holomorphic map that is a topological fibration and satisfies POP.  You might even take $P$ to be a singleton and $Q$ to be empty.

(a)  A manifold $Y$ is Oka if and only if the map from $Y$ to a point is Oka.

(b)  A holomorphic covering map is Oka.

(c)  The class of Oka maps is closed under composition.  

(d)  The pullback of an Oka map by an arbitrary holomorphic map is Oka.  

(e)  A retract of an Oka map is Oka.

(f)  An Oka map is a submersion.  (Here you need to formulate and apply the basic Oka property with interpolation for an Oka map, with the Stein manifold $X$ being a ball and the subvariety of $X$ being its centre.)

(g)  The connected components of the fibres of an Oka map are Oka manifolds.

(h)  A globally trivial bundle $Z\times F\to Z$ is Oka if and only if the fibre $F$ is Oka.
\end{exercise}

The theory of Oka manifolds and Oka maps fits into an abstract homotopy-theoretic framework in a rigorous way \cite[Appendix]{Forstneric2013}.  Properties (c), (d), and (e) of Oka maps reflect the fact that Oka maps are \textit{fibrations} in a certain homotopy-theoretic structure \cite[\S 20]{Larusson2004}.

Here is our generalisation of Proposition \ref{p:down}.

\begin{theorem}  \label{t:down}
Let $Y\to Z$ be a surjective Oka map.  Then $Y$ is Oka if and only if $Z$ is Oka.
\end{theorem}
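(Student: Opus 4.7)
The plan is to verify the convex interpolation property (CIP) for the candidate Oka manifold in each direction, shuffling liftings between $Y$ and $Z$ using POPAI and the topological-fibration property of the Oka map $\pi$.

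For the forward direction, assume $Y$ is Oka and let $f\colon S\to Z$ be holomorphic, where $S\subset\C^m$ is a contractible subvariety. First I would lift $f$ to a continuous map $g_0\colon S\to Y$: since $\pi$ is a surjective topological fibration and $S$ is contractible, one picks $y_0\in\pi^{-1}(f(s_0))$ and lifts a contraction of $S$ onto $s_0$ through the covering homotopy property, starting from the constant lift at $y_0$. POP for the Oka map $\pi$, applied with the Stein manifold $S$ and trivial parameter data, then deforms $g_0$ through lifts of $f$ into a holomorphic lift $g\colon S\to Y$. Finally, CIP for the Oka manifold $Y$ extends $g$ to a holomorphic map $h\colon\C^m\to Y$, and $\pi\circ h$ is the desired holomorphic extension of $f$.

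For the reverse direction, assume $Z$ is Oka and let $f\colon S\to Y$ be holomorphic, with $S\subset\C^m$ contractible. Pushing down, $\pi\circ f\colon S\to Z$ extends by CIP for $Z$ to a holomorphic map $h\colon\C^m\to Z$. Next I would produce a continuous lift $g_0\colon\C^m\to Y$ of $h$ with $g_0|_S=f$, by working in the Hurewicz fibration $h^*Y\to\C^m$: over the contractible base $\C^m$ this pullback is fibre-homotopy equivalent to a trivial bundle $\C^m\times F$ (by Dold's theorem), so the given section $f$ over $S$ corresponds to a nullhomotopic map $S\to F$, and the homotopy extension property for the closed cofibration $S\hookrightarrow\C^m$ supplies an extension. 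Then POPAI for the Oka map $\pi$, with $S$ as interpolation subvariety (on which $g_0$ agrees with the already-holomorphic $f$), deforms $g_0$ rel $S$ into a holomorphic lift $g_1\colon\C^m\to Y$, which is the desired holomorphic extension of $f$.

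The delicate step is the topological lifting in the reverse direction. One needs that $S\hookrightarrow\C^m$ is a closed cofibration (which holds for analytic subvarieties, e.g.\ via triangulability or since they are absolute neighbourhood retracts) and that the relevant obstructions vanish (automatic after trivialising $h^*Y$ up to fibre homotopy over the contractible base). The forward direction's topological lift is easier because both the source $S$ is contractible and $\pi$ has nonempty fibres. Apart from these topological inputs, the proof is a clean formal application of the defining properties of Oka manifolds and Oka maps.
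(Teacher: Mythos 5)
The paper leaves this theorem as an exercise with the hint to adapt the proof of Proposition~\ref{p:down}, and that is exactly what you do: push forward or back along $\pi$, apply CIP, and replace the covering-map liftings of the covering case by continuous liftings via the topological-fibration property together with POP/POPAI for the Oka map. Your two directions mirror the two cases of Proposition~\ref{p:down} precisely, with the interpolation clause of POPAI (rel $S$) playing the role that the local biholomorphism property played for coverings.

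Two small remarks. First, your topological lifting in the reverse direction is correct in spirit but a bit heavier than necessary, and slightly imprecise: a fibre-homotopy equivalence between $h^*Y$ and $\C^m\times F$ does not directly transport sections, only their homotopy classes, so one more application of the homotopy extension property is needed to repair the section after passing through $\psi\circ\phi$. It is cleaner to observe that $S\hookrightarrow\C^m$ is a closed cofibration between contractible spaces, hence an acyclic cofibration, and that every Hurewicz fibration has the right lifting property against acyclic cofibrations; this gives the desired continuous section of $h^*Y$ extending $f$ in one step, with no need for Dold's theorem. Second, the subvariety $S$ in CIP need not be smooth, whereas POP for Oka maps is stated in these notes only for Stein \emph{manifolds}; in your forward direction you apply POP with source $S$. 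This is harmless at the level of the notes (Forstneri\v c's theorems extend to Stein spaces, and the covering case has the same implicit issue), but it deserves a word. Apart from these points the argument is correct and is the intended one.
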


\begin{proof}
The proof is similar to the proof of Proposition \ref{p:down} and is left as an exercise.
\end{proof}

The image of a topological fibration is a union of path components of the target.  Since in these notes we take a complex manifold to be connected by definition, the surjectivity assumption in the theorem is in fact superfluous.

Clearly, the next question is how to recognise Oka maps.  A major theorem of Forstneri\v c provides the best available answer \cite[Corollary 1.3]{Forstneric2010}.  The following is a simple version of the theorem, originating from \cite[\S 3.3.C']{Gromov1989}.  It combines a twisted version of POPAI for maps from a Stein manifold to an Oka manifold with the basic fact that a fibre bundle is a topological fibration.

\begin{theorem}  \label{t:bundles}
The projection of a holomorphic fibre bundle with Oka fibres is an Oka map.
\end{theorem}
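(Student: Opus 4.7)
The plan is to verify both parts of the definition of an Oka map: (i) $\pi$ is a topological fibration, and (ii) $\pi$ satisfies POPAI. For (i), any locally trivial fibre bundle over a paracompact base is a Hurewicz fibration, and complex manifolds are paracompact, so this is immediate.

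For (ii), I would adapt Forstneri\v c's proof scheme from Section \ref{s:forstneric} to the relative setting. Given a Stein manifold $X$, parameter spaces $Q\subset P$, a map $f:X\times P\to Z$ that is holomorphic in the $X$ variable for each $p\in P$, and a continuous $\pi$-lifting $g_0:X\times P\to Y$ that is holomorphic over $X\times Q$, together with interpolation/approximation data on a subvariety $S\subset X$ and a holomorphically convex compact $K\subset X$, one exhausts $X$ by sublevel sets of a smooth strictly plurisubharmonic exhaustion $\psi:X\to\R$. At each noncritical step, the goal is to extend the current holomorphic lifting across a convex bump $B$ added to a strictly pseudoconvex domain $A$. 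After shrinking $B$ using Narasimhan's lemma, $f(B\times P)$ lies in a trivialising chart $U\subset Z$ over which $\pi^{-1}(U)\cong U\times F$. A $\pi$-lift of $f|_{B\times P}$ then corresponds to a pair $(f|_{B\times P},\phi)$ with $\phi:B\times P\to F$, and because $F$ is Oka and hence satisfies POPAI, the $F$-component can be deformed to a holomorphic map on $B$ with the required approximation of the existing lift on $(A\cap B)\times P$. The two resulting holomorphic lifts, one over $A$ and one over $B$, are then glued over $A\cup B$ by the thick-section/splitting-lemma method recalled in Section \ref{s:forstneric}, applied with vertical vector fields tangent to $\Ker D\pi$ so that the resulting spray moves $\pi$-lifts to $\pi$-lifts.

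The main obstacle is the critical-point case of the exhaustion, where the added region is no longer a convex bump and topological obstructions could a priori arise. These are neutralised precisely by the assumption that $\pi$ is a topological fibration together with the fact that $g_0$ is given globally on $X\times P$ as a continuous $\pi$-lifting: the covering homotopy property allows one to continuously interpolate through lifts whenever holomorphic data must be extended across a handle attachment, reducing the critical case to a further noncritical extension. Carrying the parameter $p\in P$ and the conditions over $Q$, $S$, and $K$ uniformly through the construction is routine once POPAI for $F$ and the parametric form of the splitting lemma are invoked at each step; iterating over the exhaustion then yields POPAI for $\pi$ and completes the proof.
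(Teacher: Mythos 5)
The paper itself does not prove Theorem~\ref{t:bundles}: it cites Forstneri\v c's [Forstneric2010, Corollary~1.3] as a ``major theorem'' and traces the idea to Gromov's \S3.3.C$'$, remarking only that the result ``combines a twisted version of POPAI for maps from a Stein manifold to an Oka manifold with the basic fact that a fibre bundle is a topological fibration.'' Your sketch is aimed at exactly that strategy --- fibration property of the bundle projection plus a relative bump/splitting argument where the Oka property of the fibre $F$ substitutes for CAP of the target --- so you have identified the right ingredients. But the proposal does not constitute a proof; it is a reconstruction of the outline of a theorem the paper deliberately leaves to the literature, and some of the load-bearing steps are either unsupplied or slightly misidentified.

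Two specific points. First, the critical-case handling: you say the covering homotopy property ``allows one to continuously interpolate through lifts whenever holomorphic data must be extended across a handle attachment, reducing the critical case to a further noncritical extension.'' This mislocates the mechanism. Even in the absolute case of Section~\ref{s:forstneric}, the topological obstruction at a critical level is ruled out simply by the fact that $f$ (here $g_0$) is given \emph{globally} as a continuous map (lift) on all of $X$; the Hurewicz/Serre fibration structure is not what licenses the handle attachment, and the actual crossing of a critical point is a separate geometric argument (\cite[\S3.5]{Forstneric2013}) that your sketch does not reproduce. Second, both the ``twisting'' (one cannot simply pretend the bundle is $U\times F$ over the whole exhaustion --- the transition functions between trivialising charts must be carried through the gluing) and the ``parametric form of the splitting lemma'' are invoked as if routine, but they are precisely where the substance of Forstneri\v c's proof lies. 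So while the proposal points at the correct circle of ideas, it should not be read as a proof of the statement; as the paper signals by citing rather than proving it, the remaining details are the theorem.
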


Here are two classes of Oka manifolds obtained by using Theorems \ref{t:down} and \ref{t:bundles}.

\begin{example}  \label{e:more-Oka}
(a)  A \textit{rational surface} is a compact complex surface that is birationally equivalent to the projective plane $\P_2$.  Rational surfaces form one of the classes in the Enriques-Kodaira classification of compact complex surfaces \cite{Barth-et-al2004}.  A minimal rational surface is either $\P_2$ itself or a \textit{Hirzebruch surface}, which is a holomorphic fibre bundle over $\P_1$ with fibre $\P_1$.  Being homogeneous, $\P_n$ is elliptic and hence Oka for all $n\geq 1$.  Thus all minimal rational surfaces are Oka.

It is a fundamental problem to determine which compact complex surfaces are Oka.  For an up-to-date account of what is known, see \cite{ForstnericLarusson2012}.

(b)  \textit{Toric varieties} have a rich combinatorial structure and are the subject of very active research \cite{CoxLittleSchenck2011}.  One way to define a toric variety is to say that it is a normal algebraic variety $Y$ containing a torus $T=(\C^*)^n$ as a Zariski-open subset, such that the action of $T$ on itself extends to an algebraic action on $Y$.  I showed that every smooth toric variety $Y$ is Oka \cite[Theorem 2.17]{Forstneric2013}.  

The proof goes as follows.  By the structure theory of toric varieties and a little geometric invariant theory, $Y$ is of the form $Y_0\times(\C^*)^k$, where $Y_0$ carries a holomorphic fibre bundle $\C^m\setminus A$, whose fibre is a complex Lie group, where $A$ is a finite union of linear subspaces of $\C^m$ of codimension at least 2.
\end{example}

\begin{exercise}  \label{x:toric}
Based on this information, explain why $Y$ is Oka.
\end{exercise}

The Oka principle first appeared in Oka's 1939 result that a second Cousin problem on a domain of holomorphy can be solved by holomorphic functions if it can be solved by continuous functions.  In other words, a holomorphic line bundle over a domain of holomorphy, or more generally over a Stein manifold, is holomorphically trivial if it is topologically trivial.  In 1958, Grauert extended Oka's theorem to vector bundles (and some more general fibre bundles).  We can now establish Grauert's result as follows.  

Let $E$ be a holomorphic vector bundle of rank $n$ over a Stein manifold $X$.  The frame bundle $F$ of $E$ is a holomorphic fibre bundle over $X$ whose fibre is the complex Lie group $\mathrm{GL}(n,\C)$, so the projection $F\to X$ is an Oka map by Theorem \ref{t:bundles}.  Hence every continuous section of $F$ can be deformed to a holomorphic section.  Since $E$ is topologically trivial if and only if $F$ has a continuous section, and holomorphically trivial if and only if $F$ has a holomorphic section, it follows that $E$ is holomorphically trivial if it is topologically trivial.

\begin{example}  \label{e:eremenko}
A holomorphic submersion with Oka fibres need not be an Oka map, even if it is smoothly locally trivial.  Here is an example \cite[Example 6.6]{ForstnericLarusson2011}.   Let $g:\mathbb D\to \C$ be a smooth function.  Let $\pi:E_g=\mathbb D\times \C\setminus\Gamma_g \to \mathbb D$ be the projection, where $\Gamma_g$ denotes the graph of $g$.  Clearly, $\pi$ is smoothly trivial and each fibre $\pi^{-1}(z)\cong\C\setminus\{g(z)\} \cong \C^*$ is an Oka manifold.  However, if $\pi$ is an Oka map, then $g$ is holomorphic.  Namely, if $\pi$ is Oka, then the smooth lifting $f:\mathbb D\times\C^*\to E_g$, $(z,w)\mapsto (z,w+g(z))$, of the projection $p:\mathbb D\times\C^*\to\mathbb D$ by $\pi$ can be deformed to a holomorphic lifting $h: \mathbb D\times \C^*\to E_g$.
\[ \xymatrix{ & E_g \ar[d]^{\pi} \\ 
\D\times \C^* \ar@<1ex>[ur]^f \ar[ur]_h \ar[r]_{\ \ \ p} & \D } \]
For each $z\in\mathbb D$, $g(z)$ is the missing value in the range of the holomorphic map $h(z,\cdot):\C^*\to \C$. A deep theorem of Eremenko \cite{Eremenko2006} now implies that $g$ is holomorphic.
\end{example}

We have seen that the class of Oka manifolds is much more flexible than the class of elliptic manifolds.  Oka manifolds are our primary objects of interest.  We view ellipticity as an auxiliary property, a useful geometric sufficient condition for the Oka property to hold.  However, no Oka manifolds are known not to be elliptic.  Gromov observed that Stein Oka manifolds are elliptic \cite[\S 3.2.A]{Gromov1989}.


\section{Affine simplices in Oka manifolds}
\label{s:affine-simplices}

\noindent
The homotopy type of a CW complex, for example a manifold, or more generally the weak homotopy type of an arbitrary topological space $X$, is encoded in the \textit{singular set} $sX$ of $X$.   The singular set of $X$ consists of a sequence $sX_0, sX_1, sX_2, \ldots$ of sets, where $sX_n$ is the set of $n$-simplices in $X$, that is, the set of all continuous maps into $X$ from the standard $n$-simplex
\[T_n=\{(t_0,\ldots,t_n)\in\R^{n+1}:t_0+\cdots+t_n=1,\, t_0,\ldots,t_n\geq 0\},\]
along with face maps $d_j:sX_n\to sX_{n-1}$ and degeneracy maps $s_j:sX_n\to sX_{n+1}$ for $j=0,\ldots,n$.  A face map acts by restricting an $n$-simplex to one of the faces of $T_n$, whereas a degeneracy map acts by precomposing an $n$-simplex by a map that collapses $T_{n+1}$ onto one of its faces.  The face maps and degeneracy maps satisfy several identities, for example $d_is_j=s_{j-1}d_i$ for $i<j$.  This kind of structure is called a \textit{simplicial set}.  For an introduction to simplicial sets, see \cite{GoerssJardine1999} or \cite{May1992}.  

Given a simplicial set $S$, we can construct a CW complex $\lvert S\rvert$, called the \textit{geometric realisation} of $S$, by gluing together all the simplicies in $S$ using the face and degeneracy maps.  The realisation functor is left adjoint to the singular functor, meaning that there is a natural bijection
\[ \Hom(\lvert S\rvert, X) \cong \Hom(S, sX) \]
for every singular set $S$ and topological space $X$.  Here, on the left we have continuous maps between topological spaces, and on the right morphisms of singular sets.  It can be shown that the natural map $\lvert sX\rvert\to X$ is a weak homotopy equivalence, so it is a homotopy equivalence if $X$ is a CW complex.

The \textit{affine singular set} $eX$ of a complex manifold $X$ is the simplicial set whose $n$-simplices for each $n\geq 0$ are the holomorphic maps into $X$ from the affine $n$-simplex 
\[A_n=\{(t_0,\dots,t_n)\in\C^{n+1}:t_0+\dots+t_n=1\},\]
viewed as a complex manifold biholomorphic to $\C^n$, with the obvious face maps and degeneracy maps.   If $X$ is Brody hyperbolic (meaning that every holomorphic map $\C\to X$ is constant), then $eX$ is discrete and carries no topological information about $X$.  On the other hand, when $X$ is Oka, $eX$ is \lq\lq large\rq\rq.

A holomorphic map $A_n\to X$ is determined by its restriction to $T_n\subset A_n$, so we have a monomorphism $eX\hookrightarrow sX$ of simplical sets.  When $X$ is Oka, $eX$, which is of course much smaller than $sX$, carries the homotopy type of $X$.  More precisely, the monomorphism $eX\hookrightarrow sX$ is the inclusion of a deformation retract \cite[Theorem~1]{Larusson2009b}.  In particular, we have the following result.

\begin{theorem}  \label{t:homotopically-elliptic}
An Oka manifold $X$ is homotopy equivalent to the CW complex $\lvert eX\rvert$, which is constructed from entire maps into $X$.
\end{theorem}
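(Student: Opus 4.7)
The plan is to derive the theorem from the more precise claim made in the paragraph immediately preceding it --- namely that the simplicial inclusion $i\colon eX\hookrightarrow sX$ is the inclusion of a simplicial deformation retract \cite[Theorem~1]{Larusson2009b} --- combined with the classical identification between a CW complex and the geometric realisation of its singular set.

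Granted the simplicial deformation retract claim, the argument is quick. Geometric realisation preserves simplicial deformation retracts, so $|eX|\hookrightarrow|sX|$ is a strong deformation retract of CW complexes, hence a homotopy equivalence. For any topological space $Y$, the natural map $|sY|\to Y$ is a weak homotopy equivalence, a standard result of simplicial topology. Since the manifold $X$ admits a CW structure and $|sX|$ is itself a CW complex, Whitehead's theorem upgrades $|sX|\to X$ to a genuine homotopy equivalence. Composing yields $|eX|\simeq|sX|\simeq X$, which is the theorem.

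The essential content is the deformation-retract claim itself, and this is where the Oka property of $X$ enters. I would construct the retraction $r\colon sX\to eX$ together with the simplicial homotopy $i\circ r\simeq\mathrm{id}_{sX}$ relative to $eX$ by induction on the dimension of simplices. At the $n$-th stage one is confronted with the following analytic problem: given a continuous simplex $\sigma\colon T_n\to X$ and, from the inductive hypothesis, a holomorphic map on the analytic subvariety $\partial A_n\subset A_n\cong\C^n$ formed from the affine faces, extend the data to a holomorphic map on the Stein manifold $A_n$ and produce the accompanying piece of homotopy carrying $\sigma$ back to the restriction of this extension to $T_n$. This is exactly what BOPAI on the pair $(A_n,\partial A_n)$ delivers, with the required continuous extension to $A_n$ available by contractibility of the affine simplex; and the parametric refinement POP, with parameter space $T_n$ and $Q=\partial T_n$, delivers the simultaneous homotopy while fixing pre-existing holomorphic boundary data. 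The main obstacle is the simplicial bookkeeping --- arranging the inductively chosen holomorphic extensions to commute strictly with face and degeneracy maps, and assembling the local homotopies into a single simplicial map $sX\times\Delta^1\to sX$ --- but this is a formal matter once POPAI is in hand, since POPAI supplies the required parametric lifts in the generality needed for the induction to proceed coherently across all dimensions.
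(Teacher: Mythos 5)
Your derivation of the theorem from the deformation-retract claim --- geometric realisation preserves simplicial deformation retracts, the natural map $|sX|\to X$ is a weak equivalence, and Whitehead's theorem applies because a manifold has the homotopy type of a CW complex --- is precisely what the paper compresses into ``In particular, we have the following result.'' The paper goes no further: it cites \cite[Theorem~1]{Larusson2009b} for the deformation-retract claim and explicitly declines to reproduce its proof, so the second half of your proposal (the skeletal construction of the retraction) has no counterpart in the paper to compare against. That said, two points in your sketch deserve more care than ``a formal matter.'' The continuous extension of the holomorphic boundary data from $\partial A_n$ to $A_n$ does not follow from contractibility of $A_n$ alone; you must splice $\sigma$ on $T_n$ together with the inductively constructed homotopy on $\partial T_n\times I$, using that $T_n$ and $\partial T_n$ are deformation retracts of $A_n$ and $\partial A_n$ respectively. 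And arranging the chosen holomorphic extensions to commute strictly with every face and degeneracy map is a genuine coherence problem, not bookkeeping; the usual way to obtain it is to verify a lifting property (that $eX\hookrightarrow sX$ is an anodyne extension into the Kan complex $sX$, which is where CIP/POP enters) rather than to construct the retraction directly by induction on skeleta.
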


Even for complex Lie groups, this result was not previously known.  The proof is short, but technical, so we refer those who are interested to \cite{Larusson2009b}.


\section{The space of holomorphic maps from a Stein manifold to an Oka manifold}
\label{s:space}

\noindent
Recall that an Oka manifold $Y$ satisfies BOP, meaning that every continuous map $f$ from a Stein manifold $X$ to $Y$ can be deformed to a holomorphic map.  It is natural to ask whether this can be done for all $f$ at once, in a way that depends continuously on $f$ and leaves $f$ fixed if it is holomorphic to begin with.  In other words, is $\cO(X,Y)$ a deformation retract of $\cC(X,Y)$?\footnote{A subspace $B$ of a topological space $A$ is a \textit{deformation retract} of $A$ if there is a continuous map $h:A\times I\to A$ such that $h(a,0)=a$, $h(b,t)=b$, and $h(a,1)\in B$ for all $a\in A$, $b\in B$, and $t\in I$.  (The term \textit{strong} deformation retract is sometimes used, especially in the older literature.)  We call $h$ a \textit{deformation} of $A$ onto $B$.  Then $h(\cdot,1):A\to B$ is a homotopy inverse for the inclusion $B\hookrightarrow A$.  In particular, $A$ and $B$ are homotopy equivalent.}  Equivalently, we are asking whether $Y$ satisfies POP for \textit{every} pair of parameter spaces $Q\hookrightarrow P$, or simply for the universal pair $\cO(X,Y)\hookrightarrow\cC(X,Y)$.\footnote{One motivation for asking this question is the desire to clean up what is known about the pairs of parameter spaces $Q\hookrightarrow P$ for which POP holds.  As already mentioned, Oka manifolds satisfy POP when $P$ and $Q$ are compact subsets of $\R^n$.  They also satisfy POP for a subcomplex $Q$ of a CW complex $P$, as well as for more general cellular complexes \cite[Section 16]{Larusson2004}.  And elliptic manifolds satisfy POP when $P$ and $Q$ are arbitrary compact spaces \cite[Theorem 6.2.2]{Forstneric2011}.}

Our starting point is the fact that if $X$ is Stein and $Y$ is Oka, then the inclusion $\cO(X,Y)\hookrightarrow\cC(X,Y)$ is a weak homotopy equivalence (Exercise \ref{x:weak-equivalence}(b)).  We want to upgrade the inclusion, not only to a homotopy equivalence, but to the inclusion of a deformation retract.  The commonly used sufficient condition that $\cC(X,Y)$ is a CW complex and $\cO(X,Y)$ is a subcomplex of $\cC(X,Y)$ does not apply.  As already mentioned, $\cO(X,Y)$ and $\cC(X,Y)$ are metrisable, but a metrisable CW complex is locally compact.  The only applicable sufficient condition known to me is that $\cO(X,Y)$ and $\cC(X,Y)$ are \textit{absolute neighbourhood retracts} (ANRs).  For the algebraic topology behind these statements,\footnote{It includes, among older theory, the \textit{mixed model structure} on the category of topological spaces, introduced by Cole in 2006.  The best reference on ANRs is \cite{vanMill1989}.} see~\cite{Larusson2013} and the references there.

So what are absolute neighbourhood retracts?  And how can we show that $\cO(X,Y)$ and $\cC(X,Y)$ are absolute neighbourhood retracts?

Absolute neighbourhood retracts are metric spaces (or rather, metrisable spaces) with some very nice properties and an interesting relationship with CW complexes.  A finite-dimensional metric space is an ANR if and only if it is locally contractible, meaning that every neighbourhood $U$ of each point contains a neighbourhood which is contractible in $U$.  For infinite-dimensional spaces, the notion is stronger and more subtle.  A metrisable space $X$ is an ANR if it satisfies the following nontrivially equivalent conditions.
\begin{itemize}
\item  If $X$ is homeomorphically embedded as a closed subspace of a metric space $Y$, then some neighbourhood of $X$ in $Y$ retracts onto $X$.  (This explains the name ANR.)
\item  $X$ is homeomorphic to a closed subset of a convex subset $C$ of a Banach space, and $X$ has a neighbourhood in $C$ that retracts onto $X$.  (Every metric space $X$ is homeomorphic to a closed subset of a convex set in the Banach space of bounded continuous functions $X\to\R$.)
\item  For every metric space $A$ with a closed subspace $B$, every continuous map $B\to X$ can be extended to a neighbourhood of $B$ in $A$ (depending on the map).
\item  $X$ is locally contractible, and for every metric space $A$ with a closed subspace $B$, the inclusion $B\hookrightarrow A$ has the homotopy extension property with respect to continuous maps into $X$.\footnote{If $X$ was an arbitrary topological space, then $B\hookrightarrow A$ would have to be a cofibration for the homotopy extension property to hold, that is, $B$ would have to be a deformation retract of a neighbourhood in $A$.}
\item  Every open subset of $X$ has the homotopy type of a CW complex.\footnote{This remarkable characterisation is due to Cauty.  The basic theory of ANRs dates back to the mid-20th century.  Cauty's result, published in 1994, is an example of a deep result on ANRs proved more recently.  ANRs are not much in the spotlight nowadays, although they do appear in substantial recent work in geometric group theory and geometric topology.}
\item  Every open cover $\mathscr U$ of $X$ has a refinement $\mathscr V$ such that if $S$ is a simplicial complex (with the weak topology) with a subcomplex $T$ containing all the vertices of $S$, then every continuous map $\phi_0:T\to X$ such that for each simplex $\sigma$ of $S$, $\phi_0(\sigma\cap T)\subset V$ for some $V\in\mathscr V$, extends to a continuous map $\phi:S\to X$ such that for each simplex $\sigma$ of $S$, $\phi(\sigma)\subset U$ for some $U\in\mathscr U$.
\end{itemize}
The last condition is called the \textit{Dugundji-Lefschetz property}.  It is a little complicated to state, but of the six conditions, it is the one that is most likely to be possible to verify in practice.

\begin{exercise}  \label{x:dugundji}
What does the Dugundji-Lefschetz property say when $S=[0,1]$ and $T=\{0,1\}$?
\end{exercise}

Here are some further properties of ANRs.
\begin{itemize}
\item  An open subset of an ANR is an ANR.
\item  A metric space with an open cover by ANRs is an ANR.  Thus being an ANR is a local topological property.
\item  A CW complex is an ANR if and only if it is metrisable $=$ locally finite $=$ first countable $=$ locally compact.
\item  A topological space has the homotopy type of an ANR if and only if it has the homotopy type of a CW complex.\footnote{As an amusing consequence, if every open subset of a metric space $X$ has the homotopy type of an ANR, then every open subset of $X$ \textit{is} an ANR.}
\end{itemize}

Whether $\cC(X,Y)$ is an ANR is of course a topological question.  What is probably the optimal answer may be deduced from recent work of Smrekar and Yamashita \cite{SmrekarYamashita2009}.

\begin{proposition}  \label{p:C-is-ANR}
Let $X$ be a finitely dominated countable CW complex and let $Y$ be a locally finite countable CW complex.  Then $\cC(X,Y)$ is an ANR.
\end{proposition}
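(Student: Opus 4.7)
The plan is to verify that $\cC(X,Y)$ is a metric ANR by handling the case when $X$ is a finite CW complex first and then bootstrapping via the finite domination. To begin, note that $Y$, being a locally finite countable CW complex, is metrisable, separable, and locally compact, so by the third bullet in the list of properties of ANRs above (a CW complex is an ANR if and only if it is locally finite) the space $Y$ is itself an ANR. Moreover, since $X$ is a countable CW complex it is $\sigma$-compact, and $\cC(X,Y)$ equipped with the compact-open topology is then metrisable; one can take the metric given by summing weighted uniform distances on an exhaustion of $X$ by finite subcomplexes.

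I would next dispose of the case in which $X$ is itself a finite CW complex. Here $X$ is a compact metrisable space, and it is a classical theorem in the theory of retracts (see, for example, \cite[Chapter V]{vanMill1989}) that whenever $K$ is a compact metric space and $Y$ is a metric ANR, the function space $\cC(K,Y)$ with the compact-open (equivalently, supremum) topology is again a metric ANR. Applied with $K=X$, this settles the finite case.

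For the general finitely dominated case, choose a finite CW complex $K$ and continuous maps $i:X\to K$ and $r:K\to X$ with $r\circ i\simeq\id_X$, witnessed by a homotopy $H:X\times I\to X$. The induced precomposition maps
\[ r^\#:\cC(X,Y)\to \cC(K,Y),\quad f\mapsto f\circ r, \qquad i^\#:\cC(K,Y)\to \cC(X,Y),\quad g\mapsto g\circ i, \]
satisfy $i^\#\circ r^\#\simeq\id_{\cC(X,Y)}$ through the homotopy $(f,t)\mapsto f\circ H(\cdot,t)$, so $\cC(X,Y)$ is at least a \emph{homotopy} retract of the ANR $\cC(K,Y)$. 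The hard part — and the place where the analysis of Smrekar and Yamashita \cite{SmrekarYamashita2009} is essential — is that the ANR property is not invariant under homotopy equivalence, so one must upgrade this homotopy retract to a genuine neighbourhood retract. Their construction uses the homotopy $H$, the local compactness of $Y$, and the metric structure on $\cC(K,Y)$ to exhibit $\cC(X,Y)$ as homeomorphic to a retract of an open subset of $\cC(K,Y)$. Since open subsets and retracts of ANRs are ANRs (by the properties listed earlier), the proposition follows.
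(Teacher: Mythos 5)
The paper itself does not prove this proposition; it says only that the result ``may be deduced from'' the cited work of Smrekar and Yamashita, so the comparison has to be with the deduction one would actually carry out rather than with an argument written out in the text. Your first two steps are sound: it is a classical fact that $\cC(K,Y)$ is a metric ANR for $K$ compact metric and $Y$ a metric ANR (the exponential law together with the neighbourhood extension property for $Y$, using compactness of $K$ to extract a tube), and the homotopy $(f,t)\mapsto f\circ H(\cdot,t)$ does exhibit $\cC(X,Y)$ as a homotopy retract of $\cC(K,Y)$. You also correctly identify the crux, namely that the ANR property is not a homotopy invariant, so these two facts cannot by themselves finish the argument.

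The gap is in the final step, and it is a real one. Your description of what Smrekar and Yamashita do --- that they ``exhibit $\cC(X,Y)$ as homeomorphic to a retract of an open subset of $\cC(K,Y)$'' --- is an invention, not a report of their argument; nothing in their paper constructs such an embedding, and there is no general mechanism that converts a homotopy retract into a genuine neighbourhood retract. What they actually prove is an intrinsic equivalence: for a countable CW complex $X$ and a suitable separable metric ANR $Y$, the function space $\cC(X,Y)$ is an ANR if and only if it has the homotopy type of a CW complex (in which case it is in fact an $\ell^2$-manifold). The correct way to finish is therefore to observe that your step 2 shows $\cC(X,Y)$ is homotopy dominated by $\cC(K,Y)$; since a space homotopy dominated by a CW complex has CW homotopy type, this establishes the hypothesis of the Smrekar--Yamashita equivalence, which then yields the ANR conclusion. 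Phrased this way, your step 1 is also overkill: one only needs that $\cC(K,Y)$ has CW homotopy type, which is Milnor's theorem for compact source and ANR target, not the stronger assertion that it is itself an ANR.
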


A CW complex $X$ is \textit{finitely dominated} if it has the homotopy type of a compact space.  Equivalently, $\id_X$ is homotopic to a map with relatively compact image.  This is strictly weaker than having the homotopy type of a finite CW complex.

The surprise in \cite{Larusson2013} is the discovery that if $X$ is a \lq\lq well-behaved\rq\rq\ Stein manifold and $Y$ is an Oka manifold, then the parametric Oka property with approximation with respect to parameter spaces that are finite polyhedra can be used to show that $\cO(X,Y)$ has the Dugundji-Lefschetz property.  The proof is direct and does not rely on $\cC(X,Y)$ being an ANR.

\begin{theorem}  \label{t:O-is-ANR}
Let $X$ be a Stein manifold with a strictly plurisubharmonic Morse exhaustion with finitely many critical points, and let $Y$ be an Oka manifold.  Then $\cO(X,Y)$ is an ANR.
\end{theorem}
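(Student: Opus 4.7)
The plan is to verify the Dugundji-Lefschetz characterisation of ANRs directly for $\cO(X,Y)$, using the parametric Oka property with approximation (POPA) of $Y$ for parameter pairs that are finite polyhedra. Fix a compatible metric $d$ on $Y$; the sets
\[ V(f,K,\epsilon)=\{g\in\cO(X,Y):\sup_{x\in K}d(f(x),g(x))<\epsilon\}, \]
indexed by $f\in\cO(X,Y)$, compact $K\subset X$, and $\epsilon>0$, form a basis of the compact-open topology. The refinement $\mathscr V$ of a given open cover $\mathscr U$ promised by Dugundji-Lefschetz will consist of such basic neighbourhoods with sizes appropriately shrunk. The heart of the proof is a local extension lemma: for each $V(f,K,\epsilon)\subset U\in\mathscr U$ there exist a compact $K'\supset K$ in $X$ and $\delta\in(0,\epsilon)$ such that, for every $k\geq 1$, every continuous map $\psi_0\colon\partial\Delta^k\to V(f,K',\delta)$ extends to a continuous map $\psi\colon\Delta^k\to V(f,K,\epsilon)$. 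Granting the lemma, one builds $\mathscr V$ by a hierarchical choice of $(K,\delta)$ data and extends a given compatible map $\phi_0\colon T\to\cO(X,Y)$ over $S$ by induction on the dimensions of the simplices of $S\setminus T$.

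To prove the lemma, regard $\psi_0$ as a continuous map $\Psi_0\colon\partial\Delta^k\times X\to Y$ that is holomorphic in $x$ for each fixed $p$. First I would extend $\Psi_0$ continuously, but not yet slice-holomorphically, to a map $\tilde\Psi\colon\Delta^k\times X\to Y$. Since $\Delta^k$ is convex and $\Psi_0(p,\cdot)$ is close to $f$ on $K'$ when $\delta$ is small, a radial interpolation carried out inside a tubular chart about $f(K')$ in $Y$ yields such an extension with $\tilde\Psi(\cdot,p)$ close to $f$ on $K'$ for every $p\in\Delta^k$. Next I would modify $\tilde\Psi$ so that $\tilde\Psi(\cdot,p)$ is holomorphic on a neighbourhood of $K$ for every $p$, keeping the values on $\partial\Delta^k$ unchanged and preserving closeness to $f$; this is achieved by a parametric Oka-Weil approximation inside the tubular chart, where $\tilde\Psi-f$ becomes a continuous $p$-family of $\C^N$-valued maps that are already holomorphic in $x$ for $p\in\partial\Delta^k$.

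Finally I would apply POPA to the modified $\tilde\Psi$ with parameter pair $(\Delta^k,\partial\Delta^k)$, a pair of finite polyhedra. This produces a continuous deformation of $\tilde\Psi$ to a map $\Psi\colon\Delta^k\times X\to Y$ whose slices are holomorphic on all of $X$, that agrees with $\Psi_0$ on $\partial\Delta^k\times X$, and that approximates $\tilde\Psi$ on $K$ within $\epsilon$. The hypothesis that $X$ admits a strictly plurisubharmonic Morse exhaustion with finitely many critical points enters here: the standard proof of POPA on $X$ proceeds by induction on sublevel sets of such an exhaustion, and the finite number of critical points makes the induction terminate in finitely many steps, avoiding any passage to a limit that could destroy the approximation control. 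The resulting $\psi\colon\Delta^k\to V(f,K,\epsilon)$ is the sought extension.

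The main obstacle is the intermediate step: making the continuous parameterised extension $\tilde\Psi$ holomorphic on a neighbourhood of $K$ in every slice, without disturbing the already-holomorphic boundary data on $\partial\Delta^k$ and while maintaining closeness to $f$. This requires choosing the tubular chart around $f(K')$ compatibly with $\Psi_0$ and applying a parametric Oka-Weil approximation to a continuous $p$-family of sections of a trivial bundle, with the boundary values fixed. Once this intermediate step is in hand, the remainder is a careful bookkeeping of approximations through the induction over the skeleta of $S\setminus T$.
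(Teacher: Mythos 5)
Your proposal follows exactly the strategy the paper attributes to \cite{Larusson2013}: verify the Dugundji-Lefschetz characterisation of ANRs directly for $\cO(X,Y)$ using the parametric Oka property with approximation for finite polyhedral parameter pairs $(\Delta^k,\partial\Delta^k)$, without relying on $\cC(X,Y)$ being an ANR. The paper itself only sketches the argument and refers to \cite{Larusson2013} for details; one small imprecision in your account is the claim that the finitely-many-critical-points hypothesis makes the exhaustion induction \lq\lq terminate in finitely many steps\rq\rq\ --- the exhaustion of a noncompact Stein manifold still involves infinitely many stages, and what the hypothesis gives is that only finitely many of those stages (the critical ones) degrade the approximation control by an amount that cannot be made arbitrarily small, so the cumulative loss can be bounded a priori and $(K',\delta)$ chosen uniformly in $k$ and $\psi_0$.
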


The assumption on $X$ is satisfied if $X$ is affine algebraic.  Namely, algebraically embed $X$ into $\C^n$ for some $n$.  For a generic choice of $a\in\C^n$, the smooth strictly plurisubharmonic exhaustion $X\to[0,\infty)$, $z\mapsto\lVert z-a\rVert^2$, is a Morse function with finitely many critical points.  And then $X$ has the homotopy type of a finite CW complex.

\begin{corollary}  \label{c:ANR}
Let $X$ be an affine algebraic manifold and $Y$ be an Oka manifold.  Then $\cO(X,Y)$ is a deformation retract of $\cC(X,Y)$.
\end{corollary}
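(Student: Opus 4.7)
The plan is to combine three ingredients: Theorem \ref{t:O-is-ANR} (so that $\cO(X,Y)$ is an ANR), Proposition \ref{p:C-is-ANR} (so that $\cC(X,Y)$ is an ANR), and Exercise \ref{x:weak-equivalence}(b) (so that the closed inclusion $\cO(X,Y)\hookrightarrow\cC(X,Y)$ is a weak homotopy equivalence). Once all three are in place, the corollary is a formal consequence of ANR theory, namely the fact that a closed cofibration between ANRs which is a weak homotopy equivalence is the inclusion of a strong deformation retract.

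First I would verify the hypotheses of both theorems in one stroke using the affine algebraic assumption. Algebraically embed $X$ as a closed submanifold of $\C^n$; for generic $a\in\C^n$, the smooth strictly plurisubharmonic exhaustion $\psi:X\to[0,\infty)$ given by $z\mapsto\lVert z-a\rVert^2$ is Morse by Sard, and its critical set, being cut out of the algebraic variety $X$ by the polynomial equations $d\psi=0$, is a compact algebraic subvariety of $X$, hence finite. This is exactly the hypothesis of Theorem \ref{t:O-is-ANR}, so $\cO(X,Y)$ is an ANR. The same Morse exhaustion presents $X$ with the homotopy type of a finite CW complex, so $X$ is finitely dominated; a smooth triangulation makes $X$ a countable CW complex, and the second-countable complex manifold $Y$ is a locally finite countable CW complex by smooth triangulation as well. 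Proposition \ref{p:C-is-ANR} then yields that $\cC(X,Y)$ is also an ANR.

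Next, Exercise \ref{x:weak-equivalence}(b) gives that the inclusion $i:\cO(X,Y)\hookrightarrow\cC(X,Y)$ is a weak homotopy equivalence, since $X$ is Stein and $Y$ is Oka. Because both source and target are ANRs, they have the homotopy type of CW complexes, so Whitehead's theorem promotes $i$ to a genuine homotopy equivalence. The subspace $\cO(X,Y)$ is closed in $\cC(X,Y)$ because compact-open limits of holomorphic maps are holomorphic, and a closed inclusion of one ANR into another is a (closed Hurewicz) cofibration, a classical consequence of the ANR extension property. A cofibration which is a homotopy equivalence always admits its subspace as a strong deformation retract (via the mapping cylinder argument or equivalently via ANR extension of the obvious homotopy on the mapping cylinder), which is precisely the conclusion sought.

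The genuinely hard work is not located in this corollary but in Theorem \ref{t:O-is-ANR}, whose proof uses the parametric Oka property with approximation over polyhedral parameter pairs to verify the Dugundji-Lefschetz characterisation of ANRs for $\cO(X,Y)$. Given that input, the remaining task is largely ANR bookkeeping, and the one slightly delicate point is that the affine algebraic hypothesis on $X$ is used in two places at once: it supplies the Morse exhaustion with only finitely many critical points needed for Theorem \ref{t:O-is-ANR}, and it simultaneously forces $X$ to be finitely dominated as required by Proposition \ref{p:C-is-ANR}. The mere Stein hypothesis would be insufficient for both.
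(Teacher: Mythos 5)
Your proof is correct and follows essentially the same route as the paper: both $\cO(X,Y)$ and $\cC(X,Y)$ are ANRs (via Theorem \ref{t:O-is-ANR} and Proposition \ref{p:C-is-ANR}, with the affine algebraic hypothesis doing exactly the double duty you identify), the inclusion is a closed weak equivalence by Exercise \ref{x:weak-equivalence}(b), and then ANR theory upgrades it to a strong deformation retract. The paper phrases this upgrade via Cole's mixed model structure on topological spaces (cited through \cite{Larusson2013}), whereas you argue classically through Whitehead's theorem plus the fact that a closed inclusion of ANRs is a cofibration and a cofibration that is a homotopy equivalence has its source as a strong deformation retract; these are two presentations of the same underlying mechanism.
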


The assumptions on $X$ can be relaxed.  For example, $\cO(\C\setminus\mathbb N,\C^*)$ is a deformation retract of $\cC(\C\setminus\mathbb N,\C^*)$, even though $\C\setminus\mathbb N$ is not affine algebraic and not even finitely dominated.

Finally, recall that if $X$ is Stein and $Y$ is Oka, then every path in $\cC(X,Y)$ joining two points in $\cO(X,Y)$ can be deformed, keeping the end points fixed, to a path in $\cO(X,Y)$ (Exercise \ref{x:weak-equivalence}(a)).  When $X$ is affine algebraic, Corollary \ref{c:ANR} provides a \textit{local} or a \textit{controlled} version of this property.  Namely, every neighbourhood $U$ in $\cC(X,Y)$ of a point in $\cO(X,Y)$ contains a neighbourhood $V$ such that every path in $V$ joining two points in $\cO(X,Y)$ can be deformed within $U$, keeping the end points fixed, to a path in $\cO(X,Y)$.


\section{Deformations of Oka manifolds}
\label{s:deformations}

\noindent
To determine how the Oka property behaves with respect to deformations of compact complex manifolds is a problem of fundamental importance.  Let $\pi:X\to B$ be a family of compact complex manifolds, that is, a proper holomorphic submersion, and therefore a smooth fibre bundle, from a complex manifold $X$ onto a complex manifold $B$.  Since the fibres of $\pi$ are mutually diffeomorphic, we may view $\pi$ as giving a variation of complex structure on a fixed compact smooth manifold.  Write $X_t$ for the compact complex manifold $\pi^{-1}(t)$, $t\in B$.  We would like to say as much as possible about the set $O_\pi$ of those $t\in B$ for which $X_t$ is Oka.  Here is what is known.

\begin{theorem}  \label{t:deform}
Let $\pi:X\to B$ be a family of compact complex manifolds.  Let $O_\pi$ be the set of $t\in B$ for which $\pi^{-1}(t)$ is Oka.  Then:
\begin{enumerate}
\item[(a)]  $O_\pi$ is $G_\delta$.
\item[(b)]  $O_\pi$ need not be closed.
\end{enumerate}
\end{theorem}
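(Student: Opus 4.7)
The plan splits along the two assertions.

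For part (a), I would exhibit $F = B \setminus O_\pi$ as an $F_\sigma$ subset, working with CAP as the definition of Oka. A fibre $X_t$ fails CAP exactly when there is rational data $\tau = (m, r, R, \epsilon)$ with $m \in \N$ and $0 < r < R$ and $\epsilon > 0$, together with a holomorphic map $f : \overline{B(0, R)} \to X_t$ that is not $\epsilon$-approximable on $\overline{B(0, r)}$ by any $g \in \cO(\C^m, X_t)$. The distance is measured in a fibrewise metric $d_t$ induced by a fixed Riemannian metric on the total space $X$. Letting $F_\tau \subset B$ be the set of bad parameters for datum $\tau$, we have $F = \bigcup_\tau F_\tau$ over a countable index set, and my claim would be that $\overline{F_\tau} \subset F$ for each $\tau$; this presents $F$ as a countable union of closed sets.

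For the closure step, suppose $t_n \to t_0$ with witnesses $f_n$. I would trivialise $\pi$ smoothly near $t_0$ via Ehresmann, writing $X_t \cong (M, J_t)$ for a compact model $M$ with $J_t$ depending smoothly on $t$. By elliptic regularity for the $\bar\partial_{J_{t_n}}$-equations (a Montel-type statement for varying integrable complex structures on a compact target), a subsequence of $f_n$ converges smoothly on every $\overline{B(0, R')} \Subset B(0, R)$ to a $J_{t_0}$-holomorphic $f_0$. I would then argue that $f_0$ itself witnesses $t_0 \in F_{\tau'}$ for the shrunken datum $\tau' = (m, r, R', \epsilon/2)$; otherwise an entire $J_{t_0}$-holomorphic approximant $g : \C^m \to M$ of $f_0$ on $\overline{B(0, r)}$ within $\epsilon/2$ would yield, for $n$ large, entire $J_{t_n}$-holomorphic approximants $g_n$ of $f_n$ within $\epsilon$, contradicting $t_n \in F_\tau$. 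The main obstacle is precisely this transfer step. I would construct $g_n(z) = \exp_{g(z)} u_n(z)$, where $u_n$ is a small section of $g^\ast TM$ solving an implicit equation whose linearisation is $\bar\partial_{J_{t_n}} u_n = -\bar\partial_{J_{t_n}} g$, with right-hand side small because $J_{t_n}$ is close to $J_{t_0}$. Solvability with sup-norm control on the Stein space $\C^m$ would follow from a standard implicit function argument built on Cartan's Theorem B, and crucially does not invoke the Oka property of $X_{t_n}$ itself; it lives entirely on the Stein source.

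For part (b), the plan is to exhibit one explicit family $\pi$ with non-closed $O_\pi$, drawing on the classification of compact complex surfaces. A concrete source is the moduli of minimal class VII$_0$ surfaces of fixed $b_2 > 0$ (Kato surfaces): the Enoki-type subfamily consists of Oka surfaces by Exercise~\ref{x:product-union-retract}(b), as invoked in the discussion just before the theorem, while other strata of the classification contain fibres lying in the closure of the Enoki locus that fail, say, $\C$-connectedness or dominability, hence fail the Oka property by Exercise~\ref{x:dominable}. A holomorphic one-parameter family passing from the Enoki interior to such a boundary fibre then provides $t_n \in O_\pi$ converging to $t_0 \notin O_\pi$, proving non-closedness.
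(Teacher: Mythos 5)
Your plan for part~(b) is essentially the paper's: use a holomorphic family with Enoki fibres (Oka via Exercise~\ref{x:product-union-retract}(b)) degenerating to an Inoue--Hirzebruch fibre (not Oka, via the negative plurisubharmonic function on the universal cover and Exercise~\ref{x:inoue-hirzebruch}). The paper names Dloussky's explicit family $X\to\D$; you should be aware that you cannot simply cite the existence of both types in the class VII classification --- an actual one-parameter degeneration from Enoki to Inoue--Hirzebruch is required, and producing it is the substantive content.

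For part~(a) there are two genuine gaps. First, your datum $\tau=(m,r,R,\epsilon)$ carries no bound on the derivative of the witness $f:\overline{B(0,R)}\to X_t$, so there is no reason for the sequence $(f_n)$ to be a normal family. Compactness of the target gives a uniform $C^0$ bound, but not the equicontinuity that your ``Montel-type statement'' requires: for a non-hyperbolic compact target such as a torus or $\P^n$, holomorphic maps from a fixed ball with unbounded derivative at a point have no locally uniformly convergent subsequence (e.g.\ $z\mapsto nz$ into $\C^n/\Lambda$). This is exactly why the paper's quintuple carries the input energy bound $\lVert f^*\omega\rVert_U\leq r$, used in the proof of Proposition~\ref{p:newoka} to get equicontinuity. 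Second, the transfer step is not a ``standard implicit function argument built on Cartan's Theorem B''. You want to deform an entire $J_{t_0}$-holomorphic map $g:\C^m\to M$ into an entire $J_{t_n}$-holomorphic map, with sup-norm control on all of $\C^m$. Theorem~B gives solvability of $\bar\partial$ on a Stein manifold but no bounded solution operator in supremum norm on $\C^m$ --- there is none (already $\bar\partial u=\bar z$ on $\C$ has no bounded solution). Moreover, the inhomogeneous term $\bar\partial_{J_{t_n}}g$ is controlled pointwise by $\lvert dg\rvert\cdot\lVert J_{t_n}-J_{t_0}\rVert_{C^0(M)}$, and $\lvert dg\rvert$ is generally unbounded for an entire map into a compact manifold, so the source term is not uniformly small and the implicit function theorem does not close. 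The paper avoids both obstacles at once: it never works with entire maps in the semicontinuity step, but instead compares approximation on bounded Runge domains $V_1\Subset V_2$, through the quantity $\sigma(K,U,V,r,\epsilon)$ which also records an \emph{output} energy bound on the approximant, and the resulting semicontinuity (Theorem~\ref{t:semicontinuity}, with the crucial strict inclusion $V_1\Subset V_2$) is what makes the $F_\sigma$/$G_\delta$ bookkeeping go through.
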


Part (a) is \cite[Corollary 8]{Larusson2012}.  Part (b) is \cite[Corollary 5]{ForstnericLarusson2012}.  It is an open question whether $O_\pi$ is open, that is, whether the Oka property is stable.

By comparison, the set of $t\in B$ for which $X_t$ is Kobayashi hyperbolic\footnote{A complex manifold $X$ is Kobayashi hyperbolic if there is a metric (a nondegenerate distance function) $d$ on $X$ such that $d(f(z),f(w))\leq\delta(z,w)$ for all holomorphic maps $f:\D\to X$.  Here, $\delta$ denotes the Poincar\'e distance on $\D$.  If $X$ is Kobayashi hyperbolic, then $X$ is Brody hyperbolic, meaning that every holomorphic map $\C\to X$ is constant.  The converse holds if $X$ is compact.  The theory of Kobayashi hyperbolicity is a rich subfield of complex geometry.  See \cite{Kobayashi1998}.  We can think of the Oka property as an \lq\lq anti-hyperbolicity\rq\rq\ property.  The only manifold that is both Oka and Kobayashi hyperbolic is the point.} is open \cite[Theorem 3.11.1]{Kobayashi1998}, but not necessarily closed \cite{BrodyGreen1977}.

In the remainder of this section, we sketch a proof of Theorem \ref{t:deform}.  First, let $Y$ be a compact complex manifold.  We fix a Hermitian metric $\omega$ on $Y$.  We use it to filter sets of holomorphic maps by normal families.  For our purposes, the choice of filtration is immaterial.  To get a quantitative handle on the Oka property of $Y$, we introduce the following definition.

By a \textit{quintuple} we mean a quintuple $(K, U, V, r, \epsilon)$, where $K$ is a nonempty compact subset of $\C^k$, $k\geq 1$, $U\subset V\Subset\C^k$ are neighbourhoods of $K$, $r>0$, and $\epsilon>0$.  Note that $U$ and $V$ are assumed to be relatively compact in $\C^k$.  For every quintuple $(K, U, V, r, \epsilon)$, let
\[ \sigma(K, U, V, r, \epsilon)(Y)=\sup_{\substack{f:U\to Y \textrm{ hol.} \\ \lVert f^*\omega\rVert_U\leq r}} \inf_{\substack{g:V\to Y \textrm{ hol.} \\ d(f,g)<\epsilon \textrm{ on }K}} \lVert g^*\omega \rVert_V\in[0,\infty]. \]
Here, $\lVert\cdot\rVert$ denotes the supremum norm with respect to the Euclidean metric on $\C^k$, and the distance $d(f,g)$ is with respect to $\omega$.

Clearly, $\sigma(K, U, V, r, \epsilon)(Y)$ increases as $r$ increases, $\epsilon$ decreases, $U$ shrinks, and $V$ expands.  Also, $\sigma(K, U, V, r, \epsilon)(Y)$ is finite if and only if there is $R>0$ such that every holomorphic map $f:U\to Y$ with $\lVert f^*\omega\rVert_U\leq r$ can be approximated to within $\epsilon$ on $K$ by a holomorphic map $g:V\to Y$ with $ \lVert g^*\omega \rVert_V\leq R$.  Since $Y$ is compact, whether $\sigma(K, U, V, r, \epsilon)(Y)$ is finite for all $r$ and $\epsilon$ with $K$, $U$, and $V$ fixed is independent of the choice of a Hermitian metric on $Y$.

\begin{proposition} 
\label{p:newoka}
The compact manifold $Y$ is Oka if and only if $\sigma(K, U, V, r, \epsilon)(Y)$ is finite for every quintuple $(K, U, V, r, \epsilon)$ such that $K$ is convex.
\end{proposition}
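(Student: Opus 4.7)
The plan is to prove the two directions separately. For $(\Rightarrow)$ I will use a normal-families argument leveraging compactness of $Y$: assume $Y$ is Oka and suppose for contradiction that $\sigma(K,U,V,r,\epsilon)(Y)=\infty$ for some quintuple with convex $K\subset\C^k$. Then for each integer $n$ there is a holomorphic $f_n:U\to Y$ with $\lVert f_n^*\omega\rVert_U\leq r$ such that every holomorphic $g:V\to Y$ with $d(f_n,g)<\epsilon$ on $K$ satisfies $\lVert g^*\omega\rVert_V\geq n$. The derivative bound yields equicontinuity of $\{f_n\}$ on compact subsets of $U$, and since $Y$ is compact, Arzel\`a--Ascoli produces a subsequence converging uniformly on compact subsets of $U$ to a holomorphic limit $f:U\to Y$. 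Applying CAP to the convex compact $K$ gives an entire map $g:\C^k\to Y$ with $d(f,g)<\epsilon/2$ on $K$; its restriction to $V$ has some finite norm $R:=\lVert g^*\omega\rVert_V$. For all sufficiently large $n$, $d(f_n,f)<\epsilon/2$ on $K$, hence $d(f_n,g)<\epsilon$ on $K$, so $g$ is an admissible approximant of $f_n$; taking $n>R$ yields a contradiction.

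For $(\Leftarrow)$ I will verify CAP by an iterative extension on a convex exhaustion of $\C^m$. Given a convex compact $K\subset\C^m$, a holomorphic map $f:U_0\to Y$ on a neighbourhood of $K$, and $\epsilon>0$, fix an exhaustion $K=Q_0\subset Q_1\subset Q_2\subset\cdots$ by convex compacta and a sequence $\epsilon_n>0$ with $\sum\epsilon_n<\epsilon$. Set $f_0:=f$ on $U_0$ and define $f_{n+1}$ inductively: choose open $U_n$ with $Q_n\subset U_n\Subset\mathrm{dom}(f_n)$ and open $V_n$ with $Q_{n+1}\subset V_n\Subset\C^m$; since $\overline{U_n}$ is compact in $\mathrm{dom}(f_n)$ the number $r_n:=\lVert f_n^*\omega\rVert_{U_n}$ is finite, and the hypothesis $\sigma(Q_n,U_n,V_n,r_n,\epsilon_n)<\infty$ provides a holomorphic $f_{n+1}:V_n\to Y$ with $d(f_n,f_{n+1})<\epsilon_n$ on $Q_n$. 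The telescoping estimate makes $(f_k)$ uniformly Cauchy on each $Q_n$, giving a continuous limit $f_\infty:\C^m\to Y$ that is holomorphic by Weierstrass convergence read in local charts of $Y$, and that satisfies $d(f,f_\infty)<\epsilon$ on $K$, verifying CAP.

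The main technical point is propagating finite derivative bounds through the iteration in $(\Leftarrow)$: at each stage $r_{n+1}$ could in principle depend on the bound obtained from the previous application of $\sigma$-finiteness, but choosing $U_{n+1}$ relatively compact inside $V_n$ ensures that $r_{n+1}$ is automatically finite without reference to the $\sigma$-value itself, so only existence of the approximant $f_{n+1}$ is actually used. A routine secondary point is confirming that a uniformly convergent sequence of holomorphic maps into the compact complex manifold $Y$ has a holomorphic limit, handled by a local-chart reduction to the classical Weierstrass theorem in $\C^N$.
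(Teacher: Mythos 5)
Your $(\Rightarrow)$ direction reproduces the paper's normal-families argument essentially verbatim (equicontinuity from the uniform derivative bound, compactness of $Y$ and Arzel\`a--Ascoli to extract a holomorphic limit $f$, apply CAP to $f$ on $K$, then transfer the approximant back to $f_n$ for large $n$ to contradict $\lVert g^*\omega\rVert_V > n$). The $(\Leftarrow)$ direction, which the paper dismisses with \lq\lq this is easy (and does not require compactness of $Y$),\rq\rq\ you fill in correctly with the expected exhaustion-and-iteration argument; your remark that only the existence of each approximant $f_{n+1}$ is used, with finiteness of $r_{n+1}$ coming for free from $U_{n+1}\Subset V_n$, is exactly the right observation to make the induction go through without chasing the $\sigma$-bounds.
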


\begin{proof}  $\Leftarrow$  This is easy (and does not require compactness of $Y$).  

$\Rightarrow$  Suppose $\sigma(K, U, V, r, \epsilon)(Y)=\infty$ for some quintuple $(K, U, V, r, \epsilon)$ with $K$ convex.  This means that for every $n\in\N$, there is a holomorphic map $f_n:U\to Y$ with $\lVert f_n^*\omega\rVert_U\leq r$, such that every holomorphic map $g:V\to Y$ with $d(f_n,g)<\epsilon$ on $K$ (there may be none) has $\lVert g^*\omega \rVert_V>n$.  Since the family $\{f_n:n\in\N\}$ is equicontinuous, by passing to a subsequence, we may assume that $(f_n)$ converges locally uniformly on $U$ to a holomorphic map $f:U\to Y$.  Find $n_0$ such that $d(f,f_n)<\epsilon/2$ on $K$ for all $n\geq n_0$.  If $Y$ was Oka, we could find a holomorphic map $g:\C^k\to Y$ with $d(f,g)<\epsilon/2$ on $K$.  Then $d(f_n,g)<\epsilon$ on $K$ for $n\geq n_0$, and $\lVert g^*\omega \rVert_V\leq n$ for $n$ large enough: a contradiction.
\end{proof}

Now let $\pi:X\to B$ be a family of compact complex manifolds and set $X_t=\pi^{-1}(t)$, $t\in B$.  Take a Hermitian metric $\omega$ on $X$.  For a quintuple $(K, U, V, r, \epsilon)$, write $\sigma_{U,V}(t)$ for $\sigma(K,U,V,r,\epsilon)(X_t)$, defined using the metric $\omega\vert X_t$.

The following semicontinuity result \cite[Theorem 7]{Larusson2012} is the key to Theorem \ref{t:deform}(a).  We omit the proof.

\begin{theorem}  
\label{t:semicontinuity}
Let $(K, U, V_j, r, \epsilon)$, $j=1, 2$, be quintuples such that $V_1\Subset V_2$ and $V_2$ is Stein.  For every $t_0\in B$,
\[\limsup_{t\to t_0}\sigma_{U,V_1}(t) \leq \sigma_{U,V_2}(t_0). \]
\end{theorem}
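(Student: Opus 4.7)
I would argue by contradiction via a normal-families extraction, reducing the problem to a holomorphic deformation lemma for sections of the submersion $\pi$. Write $S=\sigma_{U,V_2}(t_0)$ and assume $S<\infty$ (otherwise there is nothing to prove). If the inequality fails, there exist $\eta>0$ and a sequence $t_n\to t_0$ with $\sigma_{U,V_1}(t_n)>S+\eta$; by the definition of the supremum, choose holomorphic maps $f_n:U\to X_{t_n}$ with $\lVert f_n^*\omega\rVert_U\leq r$ such that every holomorphic $g:V_1\to X_{t_n}$ with $d(f_n,g)<\epsilon$ on $K$ satisfies $\lVert g^*\omega\rVert_{V_1}>S+\eta/2$. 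The bound $r$ makes $(f_n)$ equicontinuous, and properness of $\pi$ confines $\bigcup_n X_{t_n}$ to a compact subset of $X$; by Arzel\`a--Ascoli and Vitali, I extract a subsequence with $f_n\to f$ locally uniformly on $U$ for some holomorphic $f:U\to X_{t_0}$ with $\lVert f^*\omega\rVert_U\leq r$.

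Applying the definition of $\sigma_{U,V_2}(t_0)=S$ to $f$, I choose a holomorphic $g:V_2\to X_{t_0}$ with $d(f,g)<\epsilon/2$ on $K$ and $\lVert g^*\omega\rVert_{V_2}<S+\eta/4$. The key step, which I call the deformation lemma, is then: there exist a neighbourhood $W\subset B$ of $t_0$ and a holomorphic map $G:V_1\times W\to X$ with $G(\cdot,t_0)=g\vert_{V_1}$ and $\pi\circ G(z,t)=t$ for all $(z,t)\in V_1\times W$. Granted this, set $g_n=G(\cdot,t_n):V_1\to X_{t_n}$ for $n$ large enough that $t_n\in W$. Holomorphy of $G$ combined with $t_n\to t_0$ forces $g_n\to g\vert_{V_1}$ in $\mathcal{C}^1$, so $\lVert g_n^*\omega\rVert_{V_1}\to\lVert g^*\omega\rVert_{V_1}\leq\lVert g^*\omega\rVert_{V_2}<S+\eta/4$; combined with $d(f_n,g_n)\leq d(f_n,f)+d(f,g)+d(g,g_n)<\epsilon$ on $K$ for $n$ large, this contradicts the choice of $f_n$.

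The main obstacle is the deformation lemma, which I would prove as follows. Consider the holomorphic submersion $p:V_2\times X\to V_2\times B$ given by $p(z,x)=(z,\pi(x))$, and the graph embedding $\gamma:V_2\hookrightarrow V_2\times X$, $\gamma(z)=(z,g(z))$, which realises $V_2$ as a closed Stein submanifold. By Siu's Stein neighbourhood theorem, $\gamma(V_2)$ has a Stein open neighbourhood $\Omega\subset V_2\times X$. Over $\Omega$ the short exact sequence of holomorphic vector bundles $0\to\ker Dp\vert_\Omega\to T\Omega\to p^*T(V_2\times B)\vert_\Omega\to 0$ splits by Proposition \ref{p:vector-bundles}(a), yielding a holomorphic Ehresmann connection for $p\vert_\Omega$. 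Horizontally lifting the straight-line path $s\mapsto(z,t_0+s(t-t_0))$ starting from $\gamma(z)$ defines, by holomorphic dependence of ODE solutions on initial data and parameters, a holomorphic map $\tilde G(z,t)\in\Omega$ with $p(\tilde G(z,t))=(z,t)$, i.e., $\tilde G(z,t)=(z,G(z,t))$ with $\pi\circ G(z,t)=t$ and $G(z,t_0)=g(z)$. The uniform lower bound on the time of existence of the flow over the compact set $\gamma(\overline{V_1})\subset\Omega$, secured precisely by $V_1\Subset V_2$, provides the required neighbourhood $W$ of $t_0$.
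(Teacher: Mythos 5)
The paper does not give a proof of Theorem~\ref{t:semicontinuity}; it explicitly says ``We omit the proof'' and refers to \cite{Larusson2012}. So there is no proof of record in the text to compare against, and I can only assess your argument on its own terms. Your plan is sound, and the two pillars are exactly the right ones: a normal-families/diagonal extraction, valid because $\|f_n^*\omega\|_U\le r$ bounds the derivatives and properness of $\pi$ confines the images to $\pi^{-1}(\{t_n\}\cup\{t_0\})$, which is compact; and a holomorphic ``drag the map to nearby fibres'' lemma, which you obtain by passing to the graph $\gamma(V_2)\subset V_2\times X$ (this is what makes the Stein hypothesis on $V_2$ do its work), invoking Siu's Stein neighbourhood theorem, splitting $0\to\Ker Dp\to T\Omega\to p^*T(V_2\times B)\to 0$ over the Stein neighbourhood via Proposition~\ref{p:vector-bundles}(a), and integrating the resulting holomorphic connection. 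The use of $V_1\Subset V_2$ to get a uniform existence time over the compact set $\gamma(\overline{V_1})$ is precisely where that hypothesis is spent.

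Two small repairs are needed. First, from $\sigma_{U,V_2}(t_0)=S$ you may only choose $g:V_2\to X_{t_0}$ with $d(f,g)<\epsilon$ on $K$, not $<\epsilon/2$; the definition involves $\epsilon$ itself and nothing smaller. This costs you nothing: $K$ is compact, so $\max_K d(f,g)=\epsilon-\delta$ for some $\delta>0$, and for $n$ large $d(f_n,f)+d(g,g_n)<\delta$ on $K$, which still gives $d(f_n,g_n)<\epsilon$ on $K$. Second, for $\|g_n^*\omega\|_{V_1}\to\|g^*\omega\|_{V_1}$ you need the convergence $G(\cdot,t_n)\to g$ to be uniform in $\mathscr C^1$ on all of $V_1$, not merely on compacta; this is fine provided you run the flow over $\gamma(\overline{V_1'})$ for some intermediate $V_1\Subset V_1'\Subset V_2$, so that $G$ is defined on $V_1'\times W$ and $\overline{V_1}$ is compact in $V_1'$. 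You should also observe that the straight-line segment $t_0+s(t-t_0)$ only makes sense after choosing a holomorphic coordinate chart on $B$ near $t_0$, which is harmless since the statement is local at $t_0$. With these adjustments the argument is complete.
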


To prove Theorem \ref{t:deform}(a), note that $\sigma$ is finite for all quintuples $(K, U, V, r, \epsilon)$ with $K$ convex if and only if $\sigma$ is finite for a suitable countable set of such quintuples.  Namely, we can take $r$ and $1/\epsilon$ to be integers, $V$ to be a ball of integer radius centred at the origin, and in between any compact convex $K$ and a neighbourhood $U$ of $K$ we can fit the convex hull of a finite set of points with rational coordinates and the interior of a larger such hull.

Fix $K, U, r, \epsilon$ and take an increasing sequence $V_1\Subset V_2\Subset \cdots \Subset \C^k$ of Stein neighbourhoods of $U$.  Write $\sigma_n$ for $\sigma_{U,V_n}$.  It suffices to show that $\bigcap\{\sigma_n<\infty\}$ is $G_\delta$.  This holds since by Theorem \ref{t:semicontinuity}, $\{\sigma_n<\infty\}$ is a neighbourhood of $\{\sigma_{n+1}<\infty\}$.  In other words, for each $n\geq 1$, there is an open set $W_n$ with
\[\{\sigma_{n+1}<\infty\}\subset W_n\subset \{\sigma_n<\infty\},\] 
so $\bigcap\{\sigma_n<\infty\}=\bigcap W_n$ is $G_\delta$.

We now turn to Theorem \ref{t:deform}(b).  The required example comes from the theory of surfaces of class VII and shows that compact complex surfaces that are Oka can degenerate to a surface that is far from being Oka.

Class VII in the Enriques-Kodaira classification comprises the non-algebraic compact complex surfaces of Kodaira dimension $\kappa=-\infty$.  They all have first Betti number $b_1=1$, so they are not K\"ahler.  Minimal surfaces of class VII fall into five mutually disjoint classes.  For second Betti number $b_2=0$, we have Hopf surfaces (see Example \ref{e:Oka-not-elliptic}(a)) and Inoue surfaces.  For $b_2\geq 1$, there are Enoki surfaces, Inoue-Hirzebruch surfaces, and intermediate surfaces; together they form the class of Kato surfaces.\footnote{The only published survey on surfaces of class VII is \cite{Nakamura1989}.  In the following, take all surfaces to be minimal.  Hopf surfaces have universal covering space $\C^2\setminus\{0\}$, so they are Oka, as already noted.  Inoue surfaces have universal covering space $\C\times\D$, so they are not Oka.  The three classes of Kato surfaces are quite small.  Each can be defined by its own rather involved construction that we shall not describe here.  A different construction of Kato produces all of them.  Blow up the open ball $B$ in $\C^2$ at a point to obtain an exceptional curve $C_1$.  Choose a point $p_1$ on $C_1$, blow up, and obtain an exceptional curve $C_2$.  Do this $n\geq 1$ times.  Choose a point $p_n$ on $C_n$ and a small open ball $D$ centred at $p_n$.  Choose a biholomorphism $f$ of a neighbourhood of $\partial B$ onto a neighbourhood of $\partial D$ that maps $\partial B$ onto $\partial D$.  Use $f$ to identify $\partial B$ and $\partial D$ in the $n$-times blown-up $B$ with $D$ removed.  This yields a Kato surface with $b_2=n$.  All Kato surfaces can be constructed in this way.  The choices made determine to which of the three classes the surface belongs.}  By the \textit{global spherical shell conjecture}, currently proved only for $b_2=1$, every minimal surface of class VII with $b_2\geq 1$ is a Kato surface. 

In \cite[Section 4.3.2]{Dloussky2013}, Dloussky constructed a family $X\to\D$ of compact complex manifolds such that the central fibre $X_0$ is an Inoue-Hirzebruch surface and the other fibres $X_t$, $t\in\D\setminus\{0\}$, are minimal Enoki surfaces.  It may shown that a minimal Enoki surface is Oka, whereas the universal covering space of an Inoue-Hirzebruch surface carries a nonconstant negative plurisubharmonic function \cite[Section 4]{ForstnericLarusson2012}.

\begin{exercise}  \label{x:inoue-hirzebruch}
We say that a complex manifold $Y$ is \textit{strongly Liouville} if the universal covering space of $Y$ carries no nonconstant negative plurisubharmonic functions.  Show that if $Y$ is either dominable or $\C$-connected, then $Y$ is strongly Liouville.
\end{exercise}

It follows that an Inoue-Hirzebruch surface is neither dominable nor $\C$-connected, and is therefore not Oka.

Thus Dloussky's example shows that the following properties are in general not closed in families of compact complex manifolds.
\begin{itemize}
\item  The Oka property.
\item  Strong dominability.
\item  Dominability.
\item  $\C$-connectedness.
\item  Strong Liouvilleness.
\end{itemize}
It may be that the only interesting closed anti-hyperbolicity property is the weakest anti-hyperbolicity property, the property of not being Kobayashi hyperbolic.

Dloussky's example also shows that the Brody reparametrisation lemma that is used to show that Kobayashi hyperbolicity is open in families of compact complex manifolds has no higher-dimensional version that could be used to similarly prove that being the target of a nondegenerate holomorphic map from $\C^2$ is closed in families.

\end{document}